\newtheorem{theorem}{Theorem}[section]
\newtheorem{lemma}[theorem]{Lemma}
\newtheorem{corollary}[theorem]{Corollary}
\newtheorem{proposition}[theorem]{Proposition}
\newtheorem{sublemma}{}[theorem]
\newtheorem{conjecture}[theorem]{Conjecture}
\theoremstyle{definition}
\theoremstyle{remark}
\numberwithin{equation}{section}
\newcommand{\ba}{\backslash}
\begin{document}

\title[$2$-polymatroid minors]{A note on the connectivity of $2$-polymatroid minors}

\author{Zachary Gershkoff}
\address{Mathematics Department\\
Louisiana State University\\
Baton Rouge, Louisiana}
\email{zgersh2@math.lsu.edu}

\author{James Oxley}
\address{Mathematics Department\\
Louisiana State University\\
Baton Rouge, Louisiana}
\email{oxley@math.lsu.edu}

\subjclass{05B35}
\date{\today}

\begin{abstract}
Brylawski and Seymour independently proved that if $M$ is a connected matroid with a connected minor $N$, and $e \in E(M) - E(N)$, then $M \backslash e$ or $M / e$ is connected having $N$ as a minor. This paper proves an analogous but somewhat weaker result for $2$-polymatroids. Specifically, if $M$ is a connected $2$-polymatroid with a proper connected minor $N$, then there is an element $e$ of $E(M) - E(N)$ such that $M \backslash e$ or $M / e$ is connected having $N$ as a minor. 
We also consider what can be said  about the uniqueness of the way in which the elements of $E(M) - E(N)$ can be removed so that connectedness is always maintained.
%We also characterize the connected $2$-polymatroids $M$ having a pair of elements $\{e, f\}$ such that the removal of $e$ and of $\{e,f\}$ yields connected polymatroids, but the removal of $f$ yields a disconnected polymatroid.
\end{abstract}

\maketitle

\section{Introduction}
\label{Introduction}

Tutte \cite{wtt} proved that, whenever $e$ is an element of a  connected matroid $M$, at least one   of $M \backslash e$ and $M / e$ is connected. Brylawski \cite{bry} and Seymour \cite{sey} independently extended this theorem by showing that if $N$ is a connected minor of $M$ and $e$ is in $E(M) - E(N)$, then   $M \ba e$ or $M / e$ is connected having $N$ as a minor. In this paper, we prove a similar result for $2$-polymatroids.

For a positive integer $k$, 
a  {\em $k$-polymatroid}  $M$ is a pair 
$(E,r)$ consisting of a finite {\it ground set} $E$  and a {\it rank function} $r$,  
  from the power set of $E$ into the integers, satisfying the 
following  conditions:

\begin{itemize}
\item[(i)] $r(\emptyset) = 0$;
\item[(ii)] if $X \subseteq Y \subseteq E$, then $r(X) \le r(Y)$;
\item[(iii)] if $X$ and $Y$ are subsets of $E$, then $r(X) + r(Y) \ge r(X \cup Y) + r(X \cap Y)$; 
and
\item[(iv)] $r(\{e\})\leq k$ for all $e\in E$.
\end{itemize}

A matroid is just a 1-polymatroid, so every matroid is a $2$-polymatroid. We call $M$ a {\em polymatroid} if $M$ is a $k$-polymatroid for some $k$.    Our focus here will be mainly on $2$-polymatroids. Elements of a polymatroid of ranks $0$, $1$, and $2$ are called {\it loops}, {\it points}, and {\it lines}, respectively. Non-loop elements $p$ and $q$ are {\it parallel} if $r(\{p,q\}) = r(\{p\}) = r(\{q\})$.

Many matroid  concepts that are stated in terms of the rank function can be extended to polymatroids. In particular, for a polymatroid $M= (E,r)$ and a subset $T$  of $E$, the {\it deletion} $M\ba T$ and the {\it contraction} $M/T$ of $T$ from $M$ are the polymatroids with ground set $E - T$ and rank functions $r_{M\ba T}$ and $r_{M/T}$ where $r_{M\ba T}(X) = r(X)$ and $r_{M/ T}(X) = r(X\cup T) - r(T)$ for all subsets $X$ of $E- T$. A {\it minor} of $M$ is any polymatroid that can be obtained from $M$ by a sequence of deletions and contractions. A 
 polymatroid $M$ is {\it connected}, or equivalently {\it $2$-connected},  if there is no non-empty proper subset $X$ of its ground set $E$ such that 
 $r(X) + r(E - X) = r(E)$.  We sometimes use   $E(M)$ and $r_M$ to denote the ground set and rank function of $M$.
 
The following is the main result of the paper. 

\begin{theorem}
\label{mainone}
Let $M$ be a connected $2$-polymatroid and let $N$ be a connected minor of $M$. When $N \neq M$, there is an element $e$ of $E(M) - E(N)$ such that $M \ba e$ or $M / e$ is connected having $N$ as a minor.
\end{theorem}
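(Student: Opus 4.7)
The natural approach is induction on $|E(M)| - |E(N)|$. In the base case $|E(M)| - |E(N)| = 1$, the unique element $e$ of $E(M) - E(N)$ satisfies $N = M \backslash e$ or $N = M/e$, and this minor is connected by hypothesis, so we are done.

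For the inductive step, with $|E(M)| - |E(N)| \geq 2$, I would argue via a minimum counterexample $(M, N)$. The preliminary reduction is that no \emph{intermediate} connected minor can exist in a minimum counterexample: if there were a connected polymatroid $N^\ast$ with $E(N) \subsetneq E(N^\ast) \subsetneq E(M)$, a minor of $M$, such that $N$ is a minor of $N^\ast$, then applying the inductive hypothesis to $(M, N^\ast)$ would produce $e \in E(M) - E(N^\ast) \subseteq E(M) - E(N)$ with $M \backslash e$ or $M/e$ connected and containing $N^\ast$---hence $N$---as a minor, a contradiction. Consequently, for every $f \in E(M) - E(N)$, the single-element reduction of $M$ at $f$ retaining $N$ as a minor (which exists by reordering a minor sequence) must be disconnected.

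The main obstacle is then to derive a contradiction from this structural constraint. The plan is to fix such an $f$, examine the corresponding partition $(A, B)$ of $E(M) - \{f\}$ separating $M \backslash f$ (or $M/f$) with $E(N) \subseteq B$, and search for an element $g \in A$ such that $M \backslash g$ or $M/g$ is connected with $N$ still a minor (the $N$-minor condition again follows by reordering the minor sequence so that $g$ is removed first). Showing such a $g$ exists reduces to a rank-inequality argument: if no $g$ in $A$ works, then the induced separations of $M \backslash g$ and $M/g$ for each $g \in A$ must combine, via submodularity, with the $f$-separation to violate the connectedness of $M$. The principal technical challenge lies in this rank manipulation and the accompanying case analysis on whether $r(\{f\}) = 1$ or $2$ and on which of deletion or contraction produces the disconnection. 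It is exactly here that the $2$-polymatroid hypothesis $r(\{e\}) \leq 2$ becomes essential, since it tightly bounds how much rank each element can contribute and thereby forces the contradiction.
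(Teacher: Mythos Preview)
Your high-level scaffolding matches the paper: reduce to a putative counterexample in which, for some $f\in E(M)-E(N)$, the relevant single-element reduction of $M$ at $f$ is disconnected, locate a component $A$ (or $F$) of that reduction avoiding $E(N)$, observe that for any $g$ in that component both $M\backslash g$ and $M/g$ keep $N$ as a minor, and then try to show that some such $g$ can be removed from $M$ while preserving connectedness.

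The genuine gap is the last step. You assert that ``if no $g\in A$ works, then the induced separations of $M\backslash g$ and $M/g$ for each $g\in A$ must combine, via submodularity, with the $f$-separation to violate the connectedness of $M$,'' but you give no mechanism for producing this contradiction, and a bare submodularity count does not suffice. The paper's proof needs substantial external input here:
\begin{itemize}
\item First, Lemma~\ref{new1} forces every element of the free component to be a \emph{line}; your proposed case split on $r(\{f\})\in\{1,2\}$ is therefore illusory, since the point case is already impossible in a counterexample.
\item Second, the existence of a good $g$ inside the free component is obtained by applying Hall's theorem (Theorem~\ref{dennismain}) to the connected restriction $M|(F\cup f)$; this guarantees an element $g\neq f$ of $F$ with $(M|(F\cup f))\backslash g$ or $(M|(F\cup f))/g$ connected. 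Nothing in your outline produces such a $g$.
\item Third, upgrading connectedness of $(M|(F\cup f))/g$ to connectedness of $M/g$ (and handling the interaction between the $M/f$-components and the $M\backslash f$-components) requires the $2$-sum decomposition of $2$-polymatroids (Propositions~\ref{dennis3.6} and~\ref{connconn}) together with Lemma~\ref{new2}, plus a careful case analysis on how the $M/f$- and $M\backslash f$-separations cross (Sublemma~\ref{notempty} and the subsequent argument). These structural tools, not a direct rank inequality, are what close the argument.
\end{itemize}
In short, the plan is right up to the point where you say the rest is ``rank manipulation''; at exactly that point the paper invokes Hall's essential-element theorem and the $2$-sum machinery, and those ideas are doing the real work.
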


Unlike in the matroid case, it is not true that, for every element $e$ of $E(M) - E(N)$,  at least one of $M \ba e$ and $M / e$ is connected having $N$ as a minor.  For example, 
 let $E(M) = \{x,y,z\}$ where $x$, $y$, and $z$ are lines,  $r(\{x,y\}) = r(\{y,z\})= 3$, and $r(\{x,z\}) = 4$. Let $N$ be the 2-polymatroid consisting of a single line $z$.  
Then both $M \ba y$ and $M / y$ are disconnected, as the former consists of  two lines in rank $4$, and the latter is isomorphic to the matroid $U_{2,2}$.
%Then $M / y$ is isomorphic to the matroid $U_{2,2}$, and $M \ba y$ is a rank-$4$ $2$-polymatroid consisting of two lines and is therefore disconnected.

Theorem~\ref{mainone} will be proved in Section~\ref{splitter}.  The next section includes a number of preliminaries needed for this proof. In Section~\ref{uniq}, we consider what can be said about the uniqueness of the element $e$. 

\section{Preliminaries}
\label{prelim}

Our matroid terminology   follows Oxley~\cite{oxbook}. Indeed, much of the notation from matroid theory carries over to polymatroids. For instance, when $M$ is the polymatroid $(E,r)$ and $T \subseteq E$, the deletion $M\ba (E-T)$ is also denoted by $M|T$. Moreover, we frequently write $r(M)$ for $r(E)$.  A subset $S$ of $E$ {\it spans} a subset $T$ if $r(S\cup T) = r(S)$. A {\it component} of $M$ is a maximal non-empty subset $X$ of $E$ such that $M|X$ is connected. 
As for matroids,  the {\it connectivity function} $\lambda_M$ or $\lambda$ of $M$ is defined for all subsets $X$ of $E(M)$ by $\lambda_M(X) = r(X) + r(E-X) - r(E)$. For a positive integer $j$ and a subset $Z$ of $E(M)$, we call $Z$ and $(Z,E(M) - Z)$ {\it $j$-separating} if $\lambda_M(Z) < j$. 

The {\it local connectivity} $\sqcap(X,Y)$  between subsets $X$ and $Y$ of $E$  is given by $\sqcap(X,Y)=r(X) + r(Y) - r(X \cup Y)$.  Thus 
$\sqcap(X,E-X) = \lambda(X)$. 
The following   useful   results  for local connectivity and connectivity are proved for matroids  in \cite[Lemmas~8.2.3 and 8.2.4]{oxbook}; the proofs there extend to polymatroids. 

%The second~\cite[Lemma 3.12]{osw2} is an elementary  generalization of a matroid result that appears in \cite{osw}. 

\begin{lemma}
\label{8.2.3} Let $(E,r)$ be a polymatroid   and
let $X_1, X_2, Y_1$, and $Y_2$ be subsets of $E$ with $Y_1 \subseteq X_1$ and $Y_2 \subseteq X_2$. 
Then 
$$\sqcap(Y_1,Y_2) \le \sqcap(X_1,X_2).$$
\end{lemma}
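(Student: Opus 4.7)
The plan is to reduce to the one-sided case: since the relation $\sqcap(Y_1,Y_2)\le \sqcap(X_1,X_2)$ with $Y_i\subseteq X_i$ factors through the intermediate pair $(X_1,Y_2)$, it suffices to prove that $\sqcap(Y_1,Y_2)\le \sqcap(X_1,Y_2)$ whenever $Y_1\subseteq X_1$; applying this inequality twice (once to enlarge the first coordinate and once to enlarge the second, by the symmetry of $\sqcap$) then gives the full statement.

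Unpacking the definition of $\sqcap$, the one-sided inequality
\[
r(Y_1)+r(Y_2)-r(Y_1\cup Y_2)\le r(X_1)+r(Y_2)-r(X_1\cup Y_2)
\]
is equivalent to
\[
r(X_1\cup Y_2)+r(Y_1)\le r(X_1)+r(Y_1\cup Y_2).
\]
I would get this by applying submodularity (axiom (iii) in the definition of a polymatroid) to the sets $A=X_1$ and $B=Y_1\cup Y_2$, obtaining
\[
r(X_1)+r(Y_1\cup Y_2)\ge r(X_1\cup Y_1\cup Y_2)+r(X_1\cap (Y_1\cup Y_2)).
\]
Because $Y_1\subseteq X_1$, the union $X_1\cup Y_1\cup Y_2$ collapses to $X_1\cup Y_2$, and the intersection $X_1\cap(Y_1\cup Y_2)$ contains $Y_1$, so monotonicity (axiom (ii)) gives $r(X_1\cap (Y_1\cup Y_2))\ge r(Y_1)$. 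Substituting these two facts yields exactly the desired inequality.

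Finally, I would note that the axioms used (monotonicity and submodularity) are identical for matroids and polymatroids, so the proof requires no modification relative to the matroid statement in \cite[Lemma~8.2.3]{oxbook}; the $k$-boundedness condition (iv) on singleton ranks plays no role. There is no genuine obstacle here; the only thing to get right is choosing the correct pair of sets on which to apply submodularity so that the union simplifies and the intersection bounds $r(Y_1)$ from below.
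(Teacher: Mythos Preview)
Your argument is correct and is exactly the standard submodularity-plus-monotonicity proof; the paper does not supply its own proof but simply cites \cite[Lemma~8.2.3]{oxbook} and remarks that the matroid proof there carries over verbatim to polymatroids, which is precisely what you have written out. There is nothing to add.
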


\begin{lemma}
\label{8.2.4} Let $(E,r)$ be a polymatroid $M$ and
let $X,C$, and $D$ be disjoint subsets of $E$. 
Then 
$$\lambda_{M\ba D/C}(X) \le \lambda_M(X).$$
Moreover, equality holds if and only if 
$$r(X \cup C) = r(X) + r(C)$$ and 
$$r(E - X) + r(E-D) = r(E) + r(E- (X \cup D)).$$
\end{lemma}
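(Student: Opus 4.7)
The proof is essentially one computation followed by two applications of submodularity. First I would expand $\lambda_{M\ba D/C}(X)$ in terms of $r$. The ground set of $M\ba D/C$ is $E - D - C$, and for $Y \subseteq E - D - C$ we have $r_{M\ba D/C}(Y) = r(Y \cup C) - r(C)$. Applying this to $Y = X$ and to $Y = (E - D - C) - X$, and using the set identities $((E - D - C) - X) \cup C = E - D - X$ and $(E - D - C) \cup C = E - D$ (both of which follow from the disjointness of $X$, $C$, and $D$), I obtain
\begin{equation*}
\lambda_{M\ba D/C}(X) = r(X \cup C) + r(E - D - X) - r(E - D) - r(C).
\end{equation*}

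Next I would invoke submodularity twice. Since $X \cap C = \emptyset$, axiom (iii) gives $r(X) + r(C) \ge r(X \cup C)$. Since $(E - X) \cup (E - D) = E - (X \cap D) = E$ and $(E - X) \cap (E - D) = E - (X \cup D)$, axiom (iii) also gives $r(E - X) + r(E - D) \ge r(E) + r(E - (X \cup D))$. Adding the rearranged forms of these two inequalities,
\begin{equation*}
r(X \cup C) - r(X) - r(C) \le 0 \quad \text{and} \quad r(E - D - X) - r(E - D) \le r(E - X) - r(E),
\end{equation*}
yields exactly $\lambda_{M\ba D/C}(X) \le \lambda_M(X)$.

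The equality clause is then immediate: $\lambda_{M\ba D/C}(X) = \lambda_M(X)$ holds precisely when both of the submodular estimates above are tight. The first is tight exactly when $r(X \cup C) = r(X) + r(C)$, and the second is tight exactly when $r(E - X) + r(E - D) = r(E) + r(E - (X \cup D))$, which are the stated conditions. There is no real obstacle here; the only care needed lies in the set-theoretic bookkeeping, all of which reduces to the pairwise disjointness of $X$, $C$, and $D$.
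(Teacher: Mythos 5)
Your proof is correct, and it is essentially the same direct computation-plus-submodularity argument as the matroid proof in \cite[Lemma 8.2.4]{oxbook} that the paper cites and asserts extends to polymatroids. The expansion of $\lambda_{M\ba D/C}(X)$, the two submodularity applications, and the equality analysis all check out.
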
 

Next we note  a useful consequence of Lemma~\ref{8.2.3}.

\begin{corollary}
\label{xyconn} Let $X$ and $Y$ be sets in a polymatroid $M$ such that $X\cap Y \neq \emptyset$ and both $M|X$ and $M|Y$ are connected. 
Then $M|(X \cup Y)$ is connected.
\end{corollary}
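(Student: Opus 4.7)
My plan is to argue by contradiction using Lemma~\ref{8.2.3}, which is exactly the tool the statement is positioned to exploit. Suppose that $M|(X\cup Y)$ is disconnected. Then there is a non-empty proper subset $A$ of $X\cup Y$ with $\sqcap(A, (X\cup Y)-A)=0$; write $B=(X\cup Y)-A$, so $(A,B)$ partitions $X\cup Y$ into two non-empty pieces witnessing a $1$-separation.

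Next I would restrict this partition to $X$ and to $Y$ separately. Set $A_X=A\cap X$, $B_X=B\cap X$, $A_Y=A\cap Y$, $B_Y=B\cap Y$, so $(A_X,B_X)$ partitions $X$ and $(A_Y,B_Y)$ partitions $Y$. Applying Lemma~\ref{8.2.3} with $Y_1=A_X\subseteq A=X_1$ and $Y_2=B_X\subseteq B=X_2$ gives
$$\sqcap(A_X,B_X)\le \sqcap(A,B)=0,$$
and since local connectivity is non-negative, $\sqcap(A_X,B_X)=0$. Because $M|X$ is connected, this forces $A_X=\emptyset$ or $B_X=\emptyset$; that is, $X\subseteq B$ or $X\subseteq A$. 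The identical argument for $Y$ gives $Y\subseteq A$ or $Y\subseteq B$.

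Finally I would use the hypothesis $X\cap Y\ne\emptyset$ to eliminate the mixed cases: if $X$ and $Y$ lie on opposite sides of the partition, then $X\cap Y\subseteq A\cap B=\emptyset$, a contradiction. So $X$ and $Y$ lie on the same side, say both in $A$, whence $X\cup Y\subseteq A$ and $B=\emptyset$, contradicting the non-triviality of the separation. Hence $M|(X\cup Y)$ is connected. I do not anticipate any real obstacle; the only thing to watch is verifying that $\sqcap\ge 0$ in a polymatroid (immediate from submodularity applied to any set and $\emptyset$, since $r(\emptyset)=0$) so that $\sqcap(A_X,B_X)\le 0$ actually forces equality.
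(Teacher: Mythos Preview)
Your proof is correct and follows essentially the same approach as the paper: assume a $1$-separating partition of $X\cup Y$, apply Lemma~\ref{8.2.3} to force each of $X$ and $Y$ entirely to one side, and then use $X\cap Y\neq\emptyset$ to reach a contradiction. The only cosmetic difference is that the paper takes its separating set to be a component rather than an arbitrary $1$-separating set.
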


\begin{proof}
Suppose that $M|(X \cup Y)$ is disconnected, and let $Z$ be a component of it. Let $W = (X \cup Y) - Z$. By Lemma~\ref{8.2.3},  
$\sqcap(Z \cap X,W\cap X) \le \sqcap(Z,W) = 0$. As $M|X$ is connected, $Z\cap X$ or $W\cap X$ is empty. By symmetry, $Z\cap Y$ or $W\cap Y$ is empty. As neither $Z$ nor $W$ is empty, we may assume that both $Z\cap X$ and $W\cap Y$ are empty. It follows that $X\cap Y$   is empty, a contradiction.
\end{proof}

The following generalization of a matroid result was noted in \cite[Lemma 3.12(ii)]{osw2}. 

\begin{lemma}
\label{oswrules} Let $A, B,$  and $C$  be subsets of the ground set of a polymatroid. Then 
 $$\sqcap(A \cup B, C) + \sqcap(A,B) = 
\sqcap(A \cup C, B) + \sqcap(A,C).$$
\end{lemma}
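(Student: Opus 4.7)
The plan is to prove the identity by direct algebraic expansion, since both sides are just combinations of rank values and nothing more than the definition of $\sqcap$ is needed. In particular, no polymatroid axiom (not even submodularity) is invoked; the identity is a consequence of the definition alone and would hold for any integer-valued set function on $E$.

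Expanding the left-hand side using $\sqcap(X,Y) = r(X) + r(Y) - r(X \cup Y)$:
\[
\sqcap(A \cup B, C) + \sqcap(A,B) = \bigl(r(A \cup B) + r(C) - r(A \cup B \cup C)\bigr) + \bigl(r(A) + r(B) - r(A \cup B)\bigr).
\]
The two occurrences of $r(A \cup B)$ cancel, leaving
\[
r(A) + r(B) + r(C) - r(A \cup B \cup C).
\]
Performing the same expansion on the right-hand side gives $\bigl(r(A \cup C) + r(B) - r(A \cup B \cup C)\bigr) + \bigl(r(A) + r(C) - r(A \cup C)\bigr)$; here the term $r(A \cup C)$ cancels to produce precisely the same expression. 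Hence both sides equal the common value $r(A) + r(B) + r(C) - r(A \cup B \cup C)$, establishing the claim.

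I anticipate no substantive obstacle; the only thing to monitor is keeping signs straight during the expansion. Conceptually, the common value is symmetric in $A$, $B$, and $C$, so both sides of the identity can be read as two natural decompositions of a single ``triple local connectivity'' into pairwise local connectivities, and that symmetry is precisely what makes the equation hold.
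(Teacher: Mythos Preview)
Your proof is correct: the direct expansion of both sides using $\sqcap(X,Y) = r(X) + r(Y) - r(X\cup Y)$ reduces each to the symmetric expression $r(A)+r(B)+r(C)-r(A\cup B\cup C)$. The paper does not supply its own proof here, merely citing \cite[Lemma~3.12(ii)]{osw2}; your argument is the standard one-line verification and is exactly what one would expect.
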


%% I think the next paragraph and the lemma following it may no longer be required.

We omit the proof of the next result, which follows easily from using submodularity of the rank function.

\begin{lemma}
\label{newt}
If $\lambda(Z) = 0$ in a polymatroid $M$, then $M\ba Z = M/Z$. 
\end{lemma}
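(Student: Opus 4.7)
The plan is to verify that the rank functions of $M\backslash Z$ and $M/Z$ agree on every subset of $E - Z$. Fix $X \subseteq E - Z$. By definition, $r_{M\backslash Z}(X) = r(X)$ and $r_{M/Z}(X) = r(X \cup Z) - r(Z)$, so it suffices to establish the single identity
\[
r(X \cup Z) = r(X) + r(Z).
\]

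The inequality $r(X \cup Z) \le r(X) + r(Z)$ is immediate from submodularity (property (iii)) applied to the disjoint sets $X$ and $Z$, since $r(X \cap Z) = r(\emptyset) = 0$.

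For the reverse inequality, I would apply submodularity to the pair $X \cup Z$ and $E - Z$, whose union is $E$ and whose intersection is exactly $X$ (using $X \subseteq E - Z$). This gives
\[
r(X \cup Z) + r(E - Z) \ge r(E) + r(X).
\]
Now invoke the hypothesis $\lambda(Z) = 0$, which rearranges to $r(E - Z) = r(E) - r(Z)$; substituting and cancelling $r(E)$ yields $r(X \cup Z) \ge r(X) + r(Z)$, as required.

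There is no real obstacle here: the result is a one-line submodularity argument once the definitions of deletion, contraction, and $\lambda$ are unpacked. The only thing to be slightly careful about is the direction of submodularity used in each step, and the observation that $X \cap Z = \emptyset$ and $(X \cup Z) \cap (E - Z) = X$ follow from $X \subseteq E - Z$.
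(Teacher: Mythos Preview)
Your argument is correct and is precisely the submodularity computation the paper has in mind; the paper in fact omits the proof entirely, remarking only that it ``follows easily from using submodularity of the rank function.'' Your write-up fills in exactly those details.
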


As noted in \cite[p.409]{oxbook}, with every $2$-polymatroid $M$, we can associate a matroid   as follows. 
Let $L$ be the set of lines of $M$. For each $\ell$ in $L$,   freely  add  two points  to $\ell$ letting $M^+$ be the resulting $2$-polymatroid.   Then $M'$, the {\it natural matroid derived from $M$}, is  $M^+ \ba L$.  Oxley, Semple, and Whittle \cite[Lemma 3.3]{osw2} noted the following straightforward result.

\begin{lemma}\label{natural}
Let $M$ be a $2$-polymatroid with $|E(M)| \geq 2$ and let $M'$ be the   natural matroid derived from $M$. Then $M$ is connected if and only if $M'$ is connected.
\end{lemma}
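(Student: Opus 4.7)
The plan is to prove the biconditional by contrapositive in each direction, moving between separations of $M$ and $M'$ via the intermediate $2$-polymatroid $M^{+}$. The key structural observation is that in $M^+$ each line $\ell$ and its associated pair $\{p_\ell, q_\ell\}$ span the same flat, so $\ell \in \cl_{M^+}(\{p_\ell, q_\ell\})$ and $\{p_\ell, q_\ell\} \subseteq \cl_{M^+}(\ell)$. In particular $r(M') = r(M^+) = r(M)$, and any set containing $\ell$ has the same $M^+$-rank as the set with $\ell$ replaced by $\{p_\ell, q_\ell\}$.

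For the forward direction I would start with a separation $(X, Y)$ of $M$ and set $X' = (X \cap E(M')) \cup \bigcup_{\ell \in X \cap L}\{p_\ell, q_\ell\}$, with $Y'$ defined symmetrically. A short rank calculation in $M^+$ using the observation above gives $r_{M'}(X') = r_M(X)$ and $r_{M'}(Y') = r_M(Y)$, whence $\lambda_{M'}(X') = \lambda_M(X) = 0$. Since every line in $X$ or $Y$ contributes its two free points and every point of $M$ in $X$ or $Y$ remains in $X'$ or $Y'$, both sets are nonempty and $(X', Y')$ is a separation of $M'$.

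The converse is the harder half, because a separation $(A, B)$ of $M'$ may \emph{split} a pair, meaning $|\{p_\ell, q_\ell\} \cap A| = 1$. I would choose $(A, B)$ to minimize the number of split lines. If none exist, the forward construction reverses to give a separation of $M$. Otherwise, pick a split line $\ell$ with, say, $p_\ell \in A$ and $q_\ell \in B$. Because $p_\ell$ and $q_\ell$ are added freely to $\ell$ in a symmetric manner, $M^+$ admits an automorphism exchanging them and fixing all other elements, which restricts to an automorphism of $M'$; hence $\lambda_{M'}(A \triangle \{p_\ell, q_\ell\}) = \lambda_{M'}(A) = 0$. Submodularity of $\lambda_{M'}$ then yields
\[
\lambda_{M'}(A \cup \{q_\ell\}) + \lambda_{M'}(A - \{p_\ell\}) \le \lambda_{M'}(A) + \lambda_{M'}(A \triangle \{p_\ell, q_\ell\}) = 0,
\]
and since $\lambda_{M'} \ge 0$ both summands vanish. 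Invoking $|E(M)| \ge 2$ to exclude the degenerate case $A = \{p_\ell\}$, $B = \{q_\ell\}$, at least one of $(A \cup \{q_\ell\}, B - \{q_\ell\})$ and $(A - \{p_\ell\}, B \cup \{p_\ell\})$ is a nontrivial separation of $M'$ with strictly fewer split lines, contradicting minimality.

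The main obstacle I expect is in the backward direction, specifically making precise and exploiting the symmetry of the free extension: once the automorphism of $M^+$ swapping $p_\ell$ and $q_\ell$ is in hand, the rest is submodularity plus a small case analysis for nontriviality. The forward direction and the reduction from split to unsplit separations are then routine rank computations built around the single identity $\cl_{M^+}(\ell) = \cl_{M^+}(\{p_\ell, q_\ell\})$.
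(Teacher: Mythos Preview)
Your argument is correct. The forward direction is a clean rank transfer through $M^{+}$ using $\cl_{M^{+}}(\ell)=\cl_{M^{+}}(\{p_\ell,q_\ell\})$, and for the converse your minimality-plus-symmetry trick works: the swap $p_\ell\leftrightarrow q_\ell$ is indeed an automorphism of $M^{+}$ (hence of $M'$) because the two points are freely placed on $\ell$, so $\lambda_{M'}(A\triangle\{p_\ell,q_\ell\})=0$; submodularity of $\lambda_{M'}$ then forces $\lambda_{M'}(A\cup q_\ell)=\lambda_{M'}(A- p_\ell)=0$, and the hypothesis $|E(M)|\ge 2$ rules out the only degenerate case $A=\{p_\ell\}$, $B=\{q_\ell\}$. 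Since moving a single $p_\ell$ or $q_\ell$ unsplits $\ell$ and does not affect any other pair, you get a separation with strictly fewer split lines, contradicting minimality.

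There is nothing to compare your proof against in this paper: the lemma is stated here without proof and attributed to \cite[Lemma~3.3]{osw2} as a ``straightforward result.'' Your write-up supplies a genuine self-contained argument; the one nontrivial ingredient is the uncrossing step for split pairs, which you handle via the automorphism and submodularity rather than by a direct rank calculation. That is a perfectly good route and arguably the cleanest way to avoid case analysis on how $\{p_\ell,q_\ell\}$ sits relative to $\cl_{M'}(A)$ and $\cl_{M'}(B)$.
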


The proof of our main theorem will use the operations of  parallel connection and $2$-sum of polymatroids as introduced by Mat\'{u}\v{s}~\cite{fm} and Hall~\cite{hall}. 
For a positive integer $k$, let $M_1$ and $M_2$ be $k$-polymatroids $(E_1,r_1)$ and $(E_2,r_2)$. Suppose first that $E_1 \cap E_2 = \emptyset$. 
The {\it direct sum} $M_1 \oplus M_2$ of $M_1$ and $M_2$ is the $k$-polymatroid $(E_1 \cup E_2,r)$ where, for all subsets $A$ of $E_1 \cup E_2$, we have $r(A) = r(A\cap E_1) + r(A \cap E_2)$. 
Clearly a $2$-polymatroid is  connected if and only if it cannot be written as the direct sum of two non-empty $2$-polymatroids. 
Now suppose that  $E_1 \cap E_2 = \{p\}$ and $r_1(\{p\}) = r_2(\{p\})$. Let $P(M_1,M_2)$ be $(E_1 \cup E_2, r)$ where $r$ is defined for all subsets $A$ of $E_1 \cup E_2$ by 
$$r(A) = \min\{r_1(A \cap E_1) + r_2(A\cap E_2), r_1((A \cap E_1)\cup p) + r_2((A \cap E_2)\cup p) - r_1(\{p\})\}.$$
 Hall~\cite{hall} notes that it is routine to check that $P(M_1,M_2)$ is a $k$-polymatroid. We call it the {\it parallel connection} of $M_1$ and $M_2$ with respect to the {\it basepoint} $p$. When $M_1$ and $M_2$ are both matroids, this definition coincides with the usual definition of the parallel connection of matroids.  
 
Now let $M_1$ and $M_2$ be $2$-polymatroids having at least two elements. Suppose that $E(M_1) \cap E(M_2) = \{p\}$, 
that neither $\lambda_{M_1}(\{p\})$ nor $\lambda_{M_2}(\{p\})$ is $0$, and that 
$r_1(\{p\}) = r_2(\{p\}) = 1$. We define the {\it $2$-sum}, $M_1 \oplus_2 M_2$, of $M_1$ and $M_2$ to be $P(M_1,M_2)\ba p$. This definition~\cite{osw2} extends Hall's definition  since the latter requires each of $M_1$ and $M_2$ to  have at least three elements. 
Weakening that requirement does not alter the validity of Hall's proof of the following result~\cite[Proposition 3.6]{hall}.

\begin{proposition} 
\label{dennis3.6}
Let $M$ be a $2$-polymatroid $(E,r)$ having a partition $(X_1,X_2)$ of $E$ such that $r(X_1) + r(X_2) = r(E) + 1$. Then there are $2$-polymatroids $M_1$ and $M_2$ with ground sets $X_1 \cup p$ and $X_2 \cup p$, where $p$ is a new element, such that 
$M = P(M_1,M_2)\ba p$. In particular, for all $A \subseteq X_1 \cup p$,
\begin{equation*}
r_1(A) = 
\begin{cases} 
r(A), & \text{if   $p \not\in A$;}\\
r((A-p) \cup X_2) - r(X_2) + 1, & \text{if $p \in A$.}
\end{cases}
\end{equation*}
\end{proposition}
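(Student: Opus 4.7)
The plan is to define $M_1 = (X_1 \cup p, r_1)$ with $r_1$ given by the stated formula, and to let $M_2 = (X_2 \cup p, r_2)$ be defined symmetrically (swapping the roles of $X_1$ and $X_2$). We must then verify that (i) $r_1$ and $r_2$ are valid rank functions of $2$-polymatroids, (ii) $r_1(\{p\}) = r_2(\{p\}) = 1$ so that the parallel connection $P(M_1,M_2)$ is defined, and (iii) $P(M_1,M_2) \ba p = M$.

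Step (i) is essentially routine. Normalization is immediate, and each singleton has rank at most $2$ (inherited from $M$ for elements of $X_1$, and equal to $1$ for $p$). Submodularity of $r_1$ splits into cases according to whether $p$ lies in $A$, in $B$, in both, or in neither, and each case reduces to submodularity of $r$ applied to the appropriate unions that contain $X_2$. Monotonicity is similar; the only non-trivial instance is adding $p$ to a set $A \subseteq X_1$, for which one needs $r(A \cup X_2) \ge r(A) + r(X_2) - 1$, and this follows by combining Lemma~\ref{8.2.3} with the hypothesis $\sqcap(X_1,X_2) = r(X_1) + r(X_2) - r(E) = 1$. Claim (ii) is the one-line computation $r_1(\{p\}) = r(X_2) - r(X_2) + 1 = 1$.

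The core of the proof is step (iii). For $A \subseteq E$ with $A_i = A \cap X_i$, substituting the formulas for $r_1$ and $r_2$ into the definition of the parallel connection and simplifying with $r(X_1) + r(X_2) = r(E) + 1$ yields
\[
r_{P(M_1,M_2)}(A) = \min\bigl\{r(A_1) + r(A_2),\ r(A_1 \cup X_2) + r(A_2 \cup X_1) - r(E)\bigr\}.
\]
Setting $\beta_i = \sqcap(A_i, X_{3-i})$, the two arguments rewrite as $r(A_1) + r(A_2)$ and $r(A_1) + r(A_2) + 1 - \beta_1 - \beta_2$, so the equality $r_{P(M_1,M_2)}(A) = r(A)$ is equivalent to the identity $\sqcap(A_1, A_2) = \max\{0, \beta_1 + \beta_2 - 1\}$.

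Establishing this identity will be the main obstacle. The upper bound is supplied by Lemma~\ref{8.2.3}: since $A_1 \subseteq X_1$ and $A_2 \subseteq X_2$, we get $\sqcap(A_1, A_2) \le \min\{\beta_1, \beta_2\}$ and $\beta_i \le \sqcap(X_1, X_2) = 1$, forcing $\sqcap(A_1, A_2), \beta_1, \beta_2 \in \{0, 1\}$; moreover $\sqcap(A_1, A_2) = 0$ whenever some $\beta_i = 0$. The only remaining case is $\beta_1 = \beta_2 = 1$, in which the matching lower bound $\sqcap(A_1, A_2) \ge 1$ follows from submodularity of $r$ applied to $A_1 \cup X_2$ and $A_2 \cup X_1$, whose union is $E$ and whose intersection is $A_1 \cup A_2$; this yields $\sqcap(A_1, A_2) \ge \beta_1 + \beta_2 - 1$. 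Combining these bounds gives the identity and hence $r_{P(M_1,M_2)}(A) = r(A)$ for every $A \subseteq E$, completing the proof.
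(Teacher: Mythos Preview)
Your proof is correct. The paper, however, does not supply its own proof of this proposition: it is quoted as Hall's result \cite[Proposition~3.6]{hall}, with only the remark that relaxing the size hypothesis on $M_1$ and $M_2$ does not affect Hall's argument. There is therefore no in-paper proof to compare against.

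For what it is worth, your argument is clean and self-contained. The reduction of the identity $r_{P(M_1,M_2)}(A)=r(A)$ to $\sqcap(A_1,A_2)=\max\{0,\beta_1+\beta_2-1\}$ is efficient, and the two halves of that identity are handled exactly as one would expect: the monotonicity of local connectivity (Lemma~\ref{8.2.3}) pins each $\beta_i$ to $\{0,1\}$ and forces $\sqcap(A_1,A_2)=0$ when some $\beta_i=0$, while the submodularity inequality applied to $A_1\cup X_2$ and $A_2\cup X_1$ (whose intersection is indeed $A_1\cup A_2$ since $X_1\cap X_2=\emptyset$) gives the matching lower bound $\sqcap(A_1,A_2)\ge\beta_1+\beta_2-1$. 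The verification in step~(i) that $r_1$ is a $2$-polymatroid rank function is routine as you say; the only case worth a sentence is monotonicity across $p$, and you correctly reduce it to $\sqcap(A,X_2)\le\sqcap(X_1,X_2)=1$.
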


The following was shown by Hall~\cite[Corollary 3.5]{hall}. 

\begin{proposition}
\label{connconn} 
Let $M_1$ and $M_2$ be $2$-polymatroids $(E_1,r_1)$ and $(E_2,r_2)$ where $E_1 \cap E_2 = \{p\}$. Suppose $r_1(\{p\}) = r_2(\{p\}) = 1$ and each of $M_1$ and $M_2$ has at least two elements. Then the following are equivalent.
\begin{itemize}
\item[(i)] $M_1$ and $M_2$ are both $2$-connected; 
\item[(ii)] $M_1 \oplus_2 M_2$ is $2$-connected; and 
\item[(iii)] $P(M_1,M_2)$ is $2$-connected. 
\end{itemize}
\end{proposition}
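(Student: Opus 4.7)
The plan is to exploit the explicit rank formula for the parallel connection from Proposition~\ref{dennis3.6} and to trace how separations behave under it. The starting point is the observation that $r_i(\{p\}) = 1$ and $\lambda_{M_i}(\{p\}) \ne 0$, together with $\lambda_{M_i}(\{p\}) \le r_i(\{p\}) = 1$, force $\lambda_{M_i}(\{p\}) = 1$ and hence $r_i(E_i - p) = r_i(E_i)$ for $i=1,2$. Substituting this into the min-formula for $P := P(M_1,M_2)$ evaluated at the full ground set and at $E_1 \cup E_2 - p$ yields
\[
r_P(E_1 \cup E_2) \;=\; r_P(E_1 \cup E_2 - p) \;=\; r_1(E_1) + r_2(E_2) - 1,
\]
and in particular $\lambda_P(\{p\}) = 1$. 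These identities let separations travel between $P$ and $P \ba p = M_1 \oplus_2 M_2$.

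Two of the three implications are short given these. For (ii) $\Rightarrow$ (iii), a separation $(X,Y)$ of $P$ with $p \in X$ satisfies $X \ne \{p\}$ since $\lambda_P(\{p\}) = 1$; then $(X - p, Y)$ is a partition of $E(P) - p$ into nonempty pieces, and the ground-set rank equalities above show it is a separation of $P \ba p$. For (iii) $\Rightarrow$ (i), I argue the contrapositive: given a $1$-separation $(X_1, Y_1)$ of $E_1$ in $M_1$ with $p \in X_1$, direct substitution into the rank formula gives $r_P(X_1 \cup E_2) = r_1(X_1) + r_2(E_2) - 1$ and $r_P(Y_1) = r_1(Y_1)$, whose sum equals $r_P(E_1 \cup E_2)$.

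The substance is in (i) $\Rightarrow$ (ii). Assuming $(A, B)$ separates $P \ba p$, set $A_i = A \cap E_i$ and $B_i = B \cap E_i$. Each of $r_P(A)$ and $r_P(B)$ is realized by one of the two branches in the parallel-connection min-formula, so there are four subcases. When both are realized by the $p$-free branch, the separation equation rearranges to
\[
\bigl[r_1(A_1) + r_1(B_1) - r_1(E_1)\bigr] + \bigl[r_2(A_2) + r_2(B_2) - r_2(E_2)\bigr] = -1,
\]
and each bracket is nonnegative because $r_i(A_i) + r_i(B_i) \ge r_i(A_i \cup B_i) = r_i(E_i - p) = r_i(E_i)$, a contradiction. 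When both are realized by the $p$-inserted branch, submodularity in each $M_i$ at the intersection $\{p\}$ forces each analogous bracket to be at least $1$, whereas the equation demands the sum be $1$. In each of the two mixed subcases the equation reduces to $\lambda_{M_1}(U_1) + \lambda_{M_2}(U_2) = 0$ for suitable $U_i \subseteq E_i$, and connectedness of each $M_i$ then forces $U_i \in \{\emptyset, E_i\}$, which chases down to $A$ or $B$ being empty.

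The main obstacle is the bookkeeping in this four-case analysis: tracking whether $p$ is adjoined on the $A$-side or the $B$-side, and invoking the correct combination of submodularity and connectedness in each branch. A secondary subtlety is verifying that the preliminary rank identities genuinely depend on the hypothesis $\lambda_{M_i}(\{p\}) \ne 0$, since without it one of the two branches of the min-formula could dominate differently and the equalities at the full ground set could fail.
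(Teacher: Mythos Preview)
The paper does not prove this proposition; it is quoted from Hall~\cite[Corollary~3.5]{hall} without argument. So there is nothing in the paper to compare your approach against.

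That said, your proof is correct and self-contained. The preliminary identities $r_P(E_1\cup E_2)=r_P(E_1\cup E_2-p)=r_1(E_1)+r_2(E_2)-1$ and $\lambda_P(\{p\})=1$ are exactly right once one uses $\lambda_{M_i}(\{p\})\neq 0$ (which is part of the definition of the $2$-sum, so it is legitimate to assume throughout). The implication (ii)$\Rightarrow$(iii) goes through because, with $p\in X$, monotonicity gives $r_P(X-p)\le r_P(X)$ while $r_P(E-p)=r_P(E)$, so $r_P(X-p)+r_P(Y)\le r_P(E-p)$ and hence $\lambda_{P\ba p}(Y)=0$; your phrase ``the ground-set rank equalities above show it'' is a little terse but accurate. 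The contrapositive for (iii)$\Rightarrow$(i) is a clean direct computation, and the four-case analysis for (i)$\Rightarrow$(ii) is handled correctly: in the two ``pure'' branches the submodular lower bounds ($\ge 0$ and $\ge 1$, respectively) contradict the required sums $-1$ and $1$, while in each mixed branch the two brackets are genuine connectivity values $\lambda_{M_1}(A_1)$ and $\lambda_{M_2}(A_2)$ (or their $B$-analogues), and $2$-connectedness together with $p\notin A$ forces $A_1=A_2=\emptyset$.

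One small point worth making explicit in a write-up: in the mixed case you use that $A_i\neq E_i$ because $p\notin A$, which is what pins down $A_i=\emptyset$ rather than $A_i=E_i$; you state the conclusion but the reason is implicit.
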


The next theorem, a special case of    a result of Hall~\cite[Theorem 4.3]{hall}, will play a crucial role in the proof of our main theorem.  

\begin{theorem}
\label{dennismain}
Every connected $2$-polymatroid $M$ having at least two elements has distinct elements $x$ and $y$ such that each of 
$\{M\ba x,M/x\}$ and $\{M\ba y,M/y\}$ contains a connected $2$-polymatroid.
\end{theorem}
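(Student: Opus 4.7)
The plan is to prove Theorem~\ref{dennismain} by induction on $|E(M)|$. The base case $|E(M)|=2$ is immediate: for each element $e$, both $M\ba e$ and $M/e$ have a single element and are therefore connected, so both elements of $E(M)$ satisfy the desired property.

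For the inductive step with $|E(M)|\geq 3$, I would split into two cases according to whether $M$ admits a partition $(X_1,X_2)$ of $E(M)$ into non-empty parts with $r(X_1)+r(X_2)=r(E)+1$. In the affirmative case, Proposition~\ref{dennis3.6} provides a decomposition $M = M_1 \oplus_2 M_2$, and Proposition~\ref{connconn} guarantees that both $M_1$ and $M_2$ are connected. Applying the inductive hypothesis to each smaller $M_i$ yields two good elements in each, at most one of which can be the basepoint $p$. Hence each side contributes a good element in $E(M_i)\setminus\{p\}\subseteq E(M)$. By Proposition~\ref{connconn}, the resulting single-element deletion or contraction of $M$ preserves the $2$-sum structure and so remains connected. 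The two elements obtained, one from each side, are distinct and satisfy the conclusion for $M$.

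If no such partition exists, I would pass to the natural matroid $M'$ via Lemma~\ref{natural}. Since $M'$ is a connected matroid, Tutte's theorem gives, for every element $a\in E(M')$, that at least one of $M'\ba a$ and $M'/a$ is connected; any two distinct elements of $M'$ thus have this property in the matroid sense. Each element of $E(M')$ is either an original point of $M$ (in which case $M'\ba a=(M\ba a)'$ and $M'/a=(M/a)'$, so connectivity transfers directly to $M$ by Lemma~\ref{natural}) or a phantom $\ell_i$ sitting on a line $\ell$ of $M$. For phantoms, I would relate $M'\ba\ell_i$ and $M'/\ell_i$ to single-element operations on $\ell$ in $M$, using the identities $M'\ba\{\ell_1,\ell_2\}=(M\ba\ell)'$ and $M'/\{\ell_1,\ell_2\}=(M/\ell)'$ combined with submodular arguments via Lemmas~\ref{8.2.3} and~\ref{oswrules}, exploiting the fact that $\ell_1,\ell_2$ are freely placed on $\ell$.

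The main obstacle is this second case when the two chosen matroid elements $a,b$ both happen to be phantoms on the same line $\ell$ of $M$, so that they point to only one element of $E(M)$. The remedy is to appeal to a stronger matroid input—for instance, Bixby's lemma or the wheel-and-whirl theorem—which ensures that in a sufficiently connected matroid one may pick two good elements not both lying on any prescribed rank-$2$ flat corresponding to a line of $M$. Verifying that the removal of a single phantom $\ell_i$ in $M'$ forces connectedness of $M\ba\ell$ or $M/\ell$ (rather than a mixed deletion-contraction hybrid on the pair $\{\ell_1,\ell_2\}$) is the technical crux of the argument.
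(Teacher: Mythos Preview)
The paper does not prove Theorem~\ref{dennismain}; it is quoted as a special case of Hall~\cite[Theorem~4.3]{hall} and used as a black box, so there is no proof here to compare your attempt against. As for your outline, the $2$-sum case is broadly right but you cite the wrong tool: Proposition~\ref{connconn} says nothing about single-element minors of a $2$-sum. What you need is Lemma~\ref{new2}, and its local-connectivity hypotheses have to be verified (the paper carries out exactly this kind of check inside the proof of Theorem~\ref{mainone}). You must also treat the degenerate situation $|X_j|=1$ separately, since then the other part has $|E(M_i)|=|E(M)|$ and the inductive hypothesis does not apply to it.

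The genuine gap is your second case. Neither Bixby's lemma nor the wheel-and-whirl theorem lets you choose two good elements of $M'$ avoiding a prescribed rank-$2$ flat; that is simply not what those results assert. More fundamentally, connectedness of $M'\ba\ell_1$ or $M'/\ell_1$ says nothing directly about $M\ba\ell$ or $M/\ell$: the identities you have are $(M\ba\ell)'=M'\ba\{\ell_1,\ell_2\}$ and $(M/\ell)'=M'/\{\ell_1,\ell_2\}$, and removing a single phantom leaves a matroid that is not the natural matroid of any minor of $M$. You acknowledge this as the technical crux but offer no argument for it; Hall's actual proof proceeds by a direct analysis of essential elements in $k$-polymatroids rather than through the natural matroid.
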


The proof of our main theorem will also use the next two results, which are specializations of \cite[Theorem~3.1]{oxwh} and \cite[Lemma~46]{osw2}, respectively.

\begin{lemma}\label{new1}
Let $A$ be a subset of the ground set of a connected polymatroid $M$. If $\lambda(A) < 2 \min\{ \lambda(X) : \emptyset \neq X \subsetneqq E(M)\}$, then $M \ba A$ or $M / A$ is connected.
\end{lemma}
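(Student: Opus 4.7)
My plan is to argue by contradiction: assume that both $M \ba A$ and $M/A$ are disconnected, and set $k = \min\{\lambda(W) : \emptyset \neq W \subsetneqq E(M)\}$, so by hypothesis $\lambda_M(A) < 2k$. The crux of the argument is the following rank-function identity, valid for every partition $(Z, Z')$ of $E - A$:
\begin{equation*}
\lambda_M(Z) + \lambda_M(Z') \;=\; \lambda_{M \ba A}(Z) + \lambda_{M/A}(Z) + \lambda_M(A).
\end{equation*}
I would derive this by expanding each $\lambda$ via the rank functions of the relevant minors and using $E - Z = Z' \cup A$ and $E - Z' = Z \cup A$; all cross terms rearrange to match.

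This identity reduces the proof to producing a partition $(Z, Z')$ of $E - A$ with both $Z$ and $Z'$ nonempty such that $\lambda_{M \ba A}(Z) = \lambda_{M/A}(Z) = 0$, i.e.\ a common $1$-separation of $M \ba A$ and of $M/A$. For such a $Z$ the identity gives $\lambda_M(Z) + \lambda_M(Z') = \lambda_M(A)$; after reducing to the case that $A$ is a nonempty proper subset of $E(M)$ (the extreme cases being trivial), both $Z$ and $Z'$ are nontrivial subsets of $E(M)$, so $\lambda_M(Z), \lambda_M(Z') \ge k$ and hence $\lambda_M(A) \ge 2k$, contradicting the hypothesis.

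To produce the common $1$-separator, I would start from a connected component $X$ of $M \ba A$ and a component $Y$ of $M/A$, giving $\lambda_{M \ba A}(X) = 0$ and $\lambda_{M/A}(Y) = 0$. Submodularity of $\lambda_{M \ba A}$ shows that its vanishing sets form a sublattice of $2^{E-A}$ whose atoms are the components of $M \ba A$, and similarly for $\lambda_{M/A}$. Using the uncrossing inequalities
\begin{equation*}
\lambda_{M\ba A}(X \cap Y) + \lambda_{M\ba A}(X \cup Y) \le \lambda_{M\ba A}(Y), \qquad \lambda_{M/A}(X \cap Y) + \lambda_{M/A}(X \cup Y) \le \lambda_{M/A}(X),
\end{equation*}
together with the identity applied to $X \cap Y$ and $X \cup Y$, I would refine the pair $(X, Y)$ iteratively to extract a common $1$-separator. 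The main obstacle is executing this uncrossing: the hard case is when the component partitions of $M \ba A$ and of $M/A$ admit no nontrivial common coarsening of $E - A$. Here the strict inequality $\lambda_M(A) < 2k$ enters essentially, via the identity applied to unions of components on one side (each of which then has strictly positive $\lambda$ on the other side), and Lemma~\ref{oswrules} is available to relate the local-connectivity terms and force the required refinement.
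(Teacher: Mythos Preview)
The paper does not prove this lemma; it simply records it as a specialization of \cite[Theorem~3.1]{oxwh}. So there is no in-paper argument to compare against, and your task is really to produce a correct self-contained proof.

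Your identity
\[
\lambda_M(Z)+\lambda_M(Z')=\lambda_{M\backslash A}(Z)+\lambda_{M/A}(Z)+\lambda_M(A)
\]
for any partition $(Z,Z')$ of $E-A$ is correct and is exactly the right engine. The gap is in what you do with it. You propose to reduce to finding a \emph{common} $1$-separator of $M\backslash A$ and $M/A$, and you acknowledge that the ``hard case'' (when the two component partitions have only the trivial common coarsening) is unresolved. In fact the logic here is backwards: your own identity shows that, under the hypothesis $\lambda_M(A)<2k$, a common $1$-separator \emph{cannot} exist when both minors are disconnected, so hunting for one is a dead end. The vague appeal to Lemma~\ref{oswrules} does not rescue this.

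The clean way to finish avoids common $1$-separators entirely. Take a $1$-separation $(X,Y)$ of $M\backslash A$ and a $1$-separation $(U,V)$ of $M/A$. Applying submodularity of $r$ to the pairs $\{X,\,U\cup A\}$ and $\{Y,\,V\cup A\}$ and adding, then using $r(X)+r(Y)=r(E-A)$ and $r(U\cup A)+r(V\cup A)=r(E)+r(A)$, yields
\[
\lambda_M(X\cap U)+\lambda_M(Y\cap V)\le \lambda_M(A),
\]
and by symmetry also $\lambda_M(X\cap V)+\lambda_M(Y\cap U)\le \lambda_M(A)$. If all four intersections are nonempty, the first inequality gives $2k\le\lambda_M(A)$. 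If, say, $X\cap U=\emptyset$, then $X\cap V=X$ and $Y\cap U=U$ are both nonempty proper subsets of $E$, and the second inequality again gives $2k\le\lambda_M(A)$. Either way you contradict $\lambda_M(A)<2k$. This is the uncrossing you should be doing---in $M$, not separately in the two minors.
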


\begin{lemma}\label{new2}
Suppose that the $2$-polymatroid $M$ is the $2$-sum of polymatroids $M_1$ and $M_2$ that have ground sets $E_1 \cup p$ and $E_2 \cup p$, respectively. For $q$ in $E_1$,

\begin{enumerate}[(i)]
\item \label{new2i} if $\sqcap(E_1 - q, E_2) = 1$, then $M \ba q = (M_1 \ba q) \oplus_2 M_2$; and
\item \label{new2ii} if $\sqcap( \{q\}, E_2 ) = 0$, then $M / q = (M_1 / q) \oplus_2 M_2$.
\end{enumerate}
\end{lemma}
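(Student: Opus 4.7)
My plan is to handle both parts with a common strategy: apply Proposition~\ref{dennis3.6} to the partition $(E_1 - q, E_2)$ of the ground set of $M \ba q$ (respectively $M / q$), thereby producing a $2$-sum decomposition, and then match the rank functions of the resulting factors with those claimed in the statement, identifying the new basepoint $p'$ produced by Proposition~\ref{dennis3.6} with the original $p$.

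For part~(i), since $q \notin (E_1 - q) \cup E_2$, the three ranks appearing in $\lambda_{M \ba q}(E_1 - q)$ all coincide with their $M$-counterparts, so $\lambda_{M \ba q}(E_1 - q) = \sqcap_M(E_1 - q, E_2) = 1$ by hypothesis. Proposition~\ref{dennis3.6} then yields a decomposition $M \ba q = P(N_1, N_2) \ba p'$. To show $N_1 = M_1 \ba q$, I expand the formula for $r_{N_1}$ given by Proposition~\ref{dennis3.6} using the $2$-sum formula for $r_M$. The key simplification is that $p \in \cl_{M_2}(E_2)$, which follows from $\lambda_{M_2}(\{p\}) \neq 0$ (equivalently, $r_2(E_2) = r_2(E(M_2))$); this collapses the min in the expression for $r_M(B \cup E_2)$ to its second term, and the rest is immediate. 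A parallel but easier computation yields $N_2 = M_2$.

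For part~(ii), using $r_{M/q}(X) = r_M(X \cup q) - r_M(q)$ together with $r_M(\{q\}) = r_1(\{q\})$, the hypothesis $\sqcap_M(\{q\}, E_2) = 0$ translates via the $2$-sum formula for $r_M(E_2 \cup q)$ into $r_1(\{q, p\}) = r_1(\{q\}) + 1$, i.e.\ $q$ and $p$ are skew in $M_1$. This immediately yields $r_{M_1/q}(\{p\}) = 1$ and $\lambda_{M_1/q}(\{p\}) = 1 \ne 0$, so $(M_1/q) \oplus_2 M_2$ is well-defined. A direct expansion further gives $\lambda_{M/q}(E_1 - q) = \sqcap_M(E_1, E_2) = 1$, so Proposition~\ref{dennis3.6} applies, and the identification of factors proceeds as in part~(i) with $r_{M_1/q}(X) = r_1(X \cup \{q\}) - r_1(\{q\})$ playing the role of $r_1(X)$.

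The main obstacle is collapsing the minima in the parallel-connection rank formula when verifying that the factors produced by Proposition~\ref{dennis3.6} agree with $M_1 \ba q$ (or $M_1/q$) and $M_2$; once one notes that $p \in \cl_{M_2}(E_2)$ always forces one branch of each min to dominate, both identifications reduce to short algebraic checks.
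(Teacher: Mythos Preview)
Your argument is correct. Note, however, that the paper does not supply its own proof of this lemma: it is quoted without proof as a specialization of \cite[Lemma~46]{osw2}. Your route via Proposition~\ref{dennis3.6}---verifying $\lambda(E_1-q)=1$ in $M\ba q$ (respectively $M/q$), reading off the explicit rank formulas for the two parts, and collapsing the parallel-connection minimum using $r_2(E_2\cup p)=r_2(E_2)$---is the natural direct verification, and every step checks.

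One point worth making explicit in part~(i): the ``parallel but easier'' computation identifying $N_2$ with $M_2$ actually needs the symmetric closure condition $r_1((E_1-q)\cup p)=r_1(E_1-q)$, and this is precisely where the hypothesis $\sqcap_M(E_1-q,E_2)=1$ enters (unlike $r_2(E_2\cup p)=r_2(E_2)$, it does not follow from the $2$-sum axioms alone). The same condition is what ensures $\lambda_{M_1\ba q}(\{p\})\neq 0$, so that $(M_1\ba q)\oplus_2 M_2$ is well-defined. You have this implicitly in ``parallel,'' but since you called that side easier it is worth flagging.
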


The following lemma holds for polymatroids in general and will be useful in Section~\ref{uniq}.

\begin{lemma}\label{lconnectivity}
Let $M$ be a connected polymatroid and let $M / e$ be disconnected. If $Z$ is a component of $M / e$, then $\sqcap(Z, \{e\}) > 0$.
\end{lemma}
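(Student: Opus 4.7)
The plan is to translate the two hypotheses --- connectedness of $M$ and disconnectedness of $M/e$ with $Z$ as a component --- into two inequalities on the rank function of $M$, then combine them.

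First I would set $W = E(M) - (Z \cup \{e\})$. Since $M/e$ is disconnected and $Z$ is a component of it, $Z$ is a non-empty proper subset of $E(M) - e$, so $W$ is also non-empty. The condition that $Z$ is a component of $M/e$ gives $\lambda_{M/e}(Z) = 0$, which, using the definition of contraction, unfolds to
\[
r(Z \cup e) + r(W \cup e) = r(E(M)) + r(\{e\}).
\]

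Next, since $Z$ is a non-empty proper subset of $E(M)$ and $M$ is connected, $\lambda_M(Z) \ge 1$, i.e.
\[
r(Z) + r(W \cup e) \ge r(E(M)) + 1.
\]
Subtracting the first (equation) from the second (inequality) eliminates $r(W \cup e)$ and $r(E(M))$, yielding
\[
r(Z) - r(Z \cup e) \ge 1 - r(\{e\}),
\]
which rearranges to $\sqcap(Z,\{e\}) = r(Z) + r(\{e\}) - r(Z \cup e) \ge 1 > 0$, as required.

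There is no real obstacle here: the result is essentially bookkeeping once the two connectivity conditions are expressed as rank identities and combined. The only mild care is to verify that $Z$ and $W$ are both non-empty proper subsets of $E(M)$ so that the connectivity hypothesis on $M$ can be applied to $Z$; this follows immediately from $Z$ being a component of the disconnected polymatroid $M/e$.
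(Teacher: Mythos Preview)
Your proof is correct and follows essentially the same approach as the paper's: both use the equation coming from $\lambda_{M/e}(Z)=0$ together with the inequality coming from $\lambda_M(Z)\ge 1$, then subtract. Your version is in fact slightly more streamlined, since you subtract directly to obtain $\sqcap(Z,\{e\})\ge 1$, whereas the paper first rewrites $r(Y\cup e)$ in terms of $\sqcap(Y,\{e\})$ and arrives at $\sqcap(Z,\{e\})+\sqcap(Y,\{e\})>\sqcap(Y,\{e\})$ before cancelling.
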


%Suppose that both $M\ba f$ and $M\ba e$ are  disconnected. Then $M\ba f$ has $\{e\}$ as a component and $M\ba f /e = M\ba f \ba e$. Likewise, 
%$M\ba e$ has $\{f\}$ as a component and $M\ba e \ba f = M\ba e/f$. 
%Since $M | Z = M \ba \{e,f\}$, 
%\begin{equation}
%\label{highlighted}
%M | Z = M \ba e \ba f = M / e \ba f = M \ba e / f.
%\end{equation}
%Moreover, 
%\begin{equation}
%\label{*10}
%r(\{e\}) + r(Z) = r(M\ba f)
%\end{equation}
%and 
%\begin{equation}
%\label{*11}
%r(\{f\}) + r(Z) = r(M\ba e).
%\end{equation}
%
%Because $M/e$ is connected, $f \not\in \cl(\{e\})$, so $r(\{e,f\}) \ge r(\{e\}) + 1.$

\begin{proof}
Let $Y = E(M) - (Z \cup e)$. Then
\begin{equation}
\label{*12}
r_{M / e} (Z) + r_{M / e} (Y) = r(M / e) = r(M) - r_M(\{e\}).
\end{equation}
Moreover, since $M$ is connected,
\begin{equation}
\label{*13}
r_{M} (Z) + r_{M} (Y \cup e) > r(M).
\end{equation}
By the definition of local connectivity, $r_M(Y) + r_M(\{e\}) - \sqcap(Y,\{e\}) = r(Y \cup e)$, so we can rewrite (\ref{*13}) as 
\begin{equation}
\label{*15}
r_{M} (Z) + r_{M} (Y) > r(M)  - r_M(\{e\}) + \sqcap(Y,\{e\}) .
\end{equation}
By subtracting (\ref{*12}) from (\ref{*15}), we obtain
\begin{equation}
\label{*14}
(r_M (Z) - r_{M / e} (Z)) + (r_M (Y) - r_{M / e} (Y)) > \sqcap(Y,\{e\}).
\end{equation}
The  differences on the left-hand side can be rewritten as  local connectivities. Thus $\sqcap(Z,\{e\}) + \sqcap(Y,\{e\}) > \sqcap(Y,\{e\})$, so $\sqcap(Z,\{e\}) > 0$.
\end{proof}

\section{A Splitter Theorem for Connected $2$-polymatroids}
\label{splitter}

This section is devoted to proving  the main result of the paper.

\begin{proof}[Proof of Theorem~\ref{mainone}.]
Assume that the theorem fails. Then it follows from Theorem~\ref{dennismain} that $N$ is non-empty. 
Hence, as $M$ is connected, it has no loops. Next we note the following.

\setcounter{theorem}{1}

\begin{sublemma}
\label{sublemma-1}
If $x$ is an element of $M$ such that both $M\backslash x$ and $M/x$ have $N$ as a minor, then $x$ is a line of $M$.
\end{sublemma}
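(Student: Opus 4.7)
The plan is proof by contradiction. Suppose $x$ is not a line. Since the proof has already established that $M$ has no loops, $x$ must be a point, i.e., $r(\{x\}) = 1$. The goal is then to show that at least one of $M\ba x$ and $M/x$ is connected; combined with the hypothesis of the sublemma that both $M\ba x$ and $M/x$ have $N$ as a minor, this would produce an element $x \in E(M) - E(N)$ for which $M\ba x$ or $M/x$ is a connected polymatroid with $N$ as a minor, contradicting the overarching assumption (made at the start of the proof of Theorem~\ref{mainone}) that the theorem fails.

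To prove that one of $M\ba x, M/x$ is connected, I would apply Lemma~\ref{new1} with $A = \{x\}$. The hypothesis to verify is
$$\lambda(\{x\}) < 2 \min\{\lambda(X) : \emptyset \neq X \subsetneqq E(M)\}.$$
On the one hand, $\lambda(\{x\}) \leq r(\{x\}) = 1$ because $r(E-x) \leq r(E)$. On the other hand, the minimum on the right is at least $1$ since $M$ is connected. Thus $\lambda(\{x\}) \leq 1 < 2 \leq 2\min\{\lambda(X)\}$, so Lemma~\ref{new1} applies and yields the desired connected minor.

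No serious obstacle is expected here: the argument is essentially the polymatroid analogue of the one-line observation that, for a rank-$1$ element $e$ in a connected matroid $M$ with $|E(M)| \geq 2$, the connectivity $\lambda(\{e\})$ equals $1$ and hence Tutte's theorem places one of $M\ba e, M/e$ in the connected regime. The heavier machinery of Section~\ref{prelim} (parallel connections, $2$-sums, the natural matroid) is not needed for this sublemma; all the work is done by Lemma~\ref{new1} once the simple bound on $\lambda(\{x\})$ is in hand.
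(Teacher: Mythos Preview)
Your proposal is correct and matches the paper's own argument essentially verbatim: the paper also observes that neither $M\ba x$ nor $M/x$ can be connected (else the theorem holds), then invokes Lemma~\ref{new1} to rule out $r_M(\{x\}) = 1$, concluding that $x$ is a line since $M$ has no loops.
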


Clearly neither $M\ba x$ nor $M/x$ is connected. It follows by Lemma~\ref{new1} that   $r_M(\{x\}) \neq 1$. Hence $x$ is a line of $M$.

Take $e$ in $E(M) - E(N)$. Then, for some $M_0$ in $\{M\backslash e, M/e\}$, the $2$-polymatroid $M_0$ has $N$ as a minor.  By assumption,  $M_0$ is not connected. Take an element $f$ of a  component   of $M_0$ that avoids $E(N)$. Then both 
$M\backslash f$ and $M/f$ have $N$ as a minor. Thus, by \ref{sublemma-1},  $f$ is a line of $M$. Moreover, 

\begin{sublemma}
\label{sublemma0}
$r(M\ba f) = r(M)$.
\end{sublemma}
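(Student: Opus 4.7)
The plan is to argue by contradiction. I would suppose $r(M\ba f) < r(M)$ and use Lemma~\ref{new1} to force one of $M\ba f$, $M/f$ to be connected, which will be incompatible with the setup preceding \ref{sublemma0}.

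Because $f$ is a line, $r_M(\{f\})=2$, and the contradiction hypothesis yields
$$\lambda_M(\{f\}) \;=\; r_M(\{f\}) + r(M\ba f) - r(M) \;\le\; 1.$$
Since $M$ is connected and $|E(M)| \ge 2$ (as $N$ is non-empty and $f \in E(M) - E(N)$), every non-empty proper subset of $E(M)$ has connectivity at least $1$. In particular $\{f\}$ is such a subset, so $\lambda_M(\{f\}) = 1$ and also $\min\{\lambda_M(X) : \emptyset \neq X \subsetneq E(M)\} = 1$. Hence
$$\lambda_M(\{f\}) \;=\; 1 \;<\; 2 \;=\; 2\min\{\lambda_M(X): \emptyset\ne X\subsetneq E(M)\},$$
and Lemma~\ref{new1} applied with $A = \{f\}$ gives that $M\ba f$ or $M/f$ is connected.

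This will contradict the setup. By the discussion preceding \ref{sublemma0}, both $M\ba f$ and $M/f$ contain $N$ as a minor; and the standing assumption that Theorem~\ref{mainone} fails then forces each of them to be disconnected (this is exactly the observation opening the proof of \ref{sublemma-1}). The contradiction establishes $\lambda_M(\{f\}) = 2$, i.e., $r(M\ba f) = r(M)$.

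I do not anticipate any real obstacle here: the argument is essentially a direct rerun of the proof of \ref{sublemma-1}. There, Lemma~\ref{new1} was used to rule out $r_M(\{f\}) = 1$; the same tool, applied once more to the single element $f$, now rules out the residual case in which $f$ is a line but deleting it drops the rank.
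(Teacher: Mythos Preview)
Your argument is correct and, in fact, cleaner than the paper's. The paper proves \ref{sublemma0} by passing to the natural derived matroid $M'$: assuming $r(E-f)=r(E)-1$, it tracks the two points $f_1,f_2$ replacing the line $f$, observes that $f_2$ is a coloop of $M'\ba f_1$, and chases connectivity of $M'/f_1$, $M'/f_1\ba f_2$, and $M'\ba f_1,f_2$ to conclude (via Lemma~\ref{natural}) that $M\ba f$ is connected, a contradiction. You instead stay inside $M$ and reapply Lemma~\ref{new1} to the singleton $\{f\}$, exactly as in the proof of \ref{sublemma-1}: the hypothesis $r(E-f)<r(E)$ forces $\lambda_M(\{f\})=1$, and since the minimum nonzero connectivity in the connected polymatroid $M$ is $1$, Lemma~\ref{new1} yields a connected single-element deletion or contraction. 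Your route avoids the natural-matroid machinery entirely and makes transparent that \ref{sublemma-1} and \ref{sublemma0} are really two halves of the same observation, namely that $\lambda_M(\{f\})=2$; the paper's route, by contrast, illustrates concretely how the derived matroid can be used to import matroid connectivity facts.
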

To see this, suppose $r(E - f) < r(E)$. Then, as $M$ is connected, $r(E - f) = r(E) -1$. Let $M'$ be the natural matroid derived from $M$ and let $f_1$ and $f_2$ be the points of $M'$ corresponding to $f$. Then $M'\ba f_1$ has $f_2$ as a coloop, so $M'/f_1$ is connected. Now $M/f$ is disconnected, so, by Lemma~\ref{natural}, $M'/f_1,f_2$ is disconnected. Therefore, $M'/f_1\ba f_2$ is connected. But $M' \ba f_2$ has $f_1$ as a coloop, so $M'\ba f_2/f_1 = M'\ba f_2\ba f_1$. As this matroid is connected,  by Lemma~\ref{natural},  $M\ba f$ is too; a contradiction.

\begin{sublemma}\label{sublemma1}
Let $K$ be a component of $M/f$. Then $M|(K \cup f)$ is connected. 
\end{sublemma}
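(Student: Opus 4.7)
My plan is to argue by contradiction. Suppose $M|(K \cup f)$ is disconnected. Then there is a non-empty proper subset $S$ of $K \cup f$ with $r(S) + r((K \cup f) - S) = r(K \cup f)$. After possibly interchanging $S$ with its complement in $K \cup f$, I may assume $f \in S$. Write $S' = S - f$ and $T = (K \cup f) - S$, so that $T$ is a non-empty subset of $K$ and
$$r(S' \cup f) + r(T) = r(K \cup f). \qquad (*)$$

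I would then split into two cases based on whether $S'$ is empty. If $S' = \emptyset$, then $(*)$ reduces to $r(\{f\}) + r(K) = r(K \cup f)$, so $\sqcap(K, \{f\}) = 0$. Since $M$ is connected, $M/f$ is disconnected, and $K$ is a component of $M/f$, Lemma~\ref{lconnectivity} gives $\sqcap(K, \{f\}) > 0$, a contradiction.

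The main case is $S' \neq \emptyset$, in which $(S', T)$ is a non-trivial partition of $K$ into non-empty subsets. The key step is a single application of submodularity to $T$ and $\{f\}$: $r(T \cup f) \leq r(T) + r(\{f\})$. Adding $r(S' \cup f)$ to both sides and invoking $(*)$ yields
$$r(S' \cup f) + r(T \cup f) \leq r(K \cup f) + r(\{f\}).$$
Subtracting $2r(\{f\})$ from both sides and translating through $r_{M/f}(X) = r(X \cup f) - r(\{f\})$ for $X \subseteq K$, this becomes $r_{M/f}(S') + r_{M/f}(T) \leq r_{M/f}(K)$, which contradicts the connectedness of $(M/f)|K$, since $K$ is a component of $M/f$ and $(S', T)$ is a non-trivial partition of $K$.

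I do not foresee a substantive obstacle: the whole argument reduces to one submodularity inequality, one translation between the rank functions of $M$ and $M/f$, and one invocation of Lemma~\ref{lconnectivity} as a black box. The only bookkeeping point to handle carefully is confirming that both $S'$ and $T$ are non-empty in the second case, so that the resulting inequality genuinely contradicts the connectedness of $(M/f)|K$ rather than merely restating the trivial bound $r_{M/f}(\emptyset) + r_{M/f}(K) = r_{M/f}(K)$.
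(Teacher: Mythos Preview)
Your proof is correct and follows essentially the same route as the paper's. The only cosmetic differences are that the paper packages your Case~2 computation as an appeal to Lemma~\ref{8.2.4} (a $1$-separating set stays $1$-separating in a minor), and in your Case~1 the paper derives the contradiction directly from the connectedness of $M$ (combining $\sqcap(K,\{f\})=0$ with $K$ being a component of $M/f$ yields $\lambda_M(K)=0$) rather than invoking Lemma~\ref{lconnectivity}.
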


Suppose $M|(K \cup f)$ is disconnected. Then $K$ is the disjoint union of sets $X$ and $Y$ such that $r(X \cup f) + r(Y) = r(K \cup f)$. As $Y$ is $1$-separating in $M|(K \cup f)$, it is $1$-separating in $(M|(K \cup f))/f$, that is, in $(M/f)|K$. But $K$ is a component of the last matroid, so $K = Y$. Thus $X = \emptyset$, so $r(K \cup f) = r(K) + r(\{f\})$. It follows that $K$ is $1$-separating in $M$; a contradiction.  Hence \ref{sublemma1} holds.

%As $(M/f)|K = (M|(K \cup f))/f$, the latter is connected. Let $J = E(M) - f - K$. Then $r_{M/f}(K) + r_{M/f}(J) = r(M/f)$, so 
%$r(K \cup f) + r(J \cup f) = r(M) + 2$. As $M$ is connected, we deduce that 
%\begin{equation} 
%\label{eq1}
%r(K \cup f) \le r(K) + 1.
%\end{equation}

%Assume $M|(K \cup f)$ is disconnected. Then $X \cup f$ is a component of $M|(K \cup f)$ for some proper subset $X$ of $K$ where it follows by (\ref{eq1}) that $X \neq \emptyset$. Let $Y = K - X$. Then $r(X \cup f) + r(Y) = r(K \cup f)$. Thus 
%$$r(X \cup f) - r(\{f\}) + r(Y \cup f) - r(\{f\}) - [r(K \cup f) - r(\{f\})] =  r(Y \cup f) - r(Y) - r(\{f\}),$$ 
%so 
%$$r_{M/f}(X) + r_{M/f}(Y) - r((M/f)|K) = r(Y \cup f) - r(Y) - r(\{f\}) \le 0.$$
%Thus $(M/f)|K$ is disconnected; a contradiction. Hence \ref{sublemma1} holds. 

Now let $F$ be  a component of $M/f$ that avoids $E(N)$. By \ref{sublemma-1}, every element of $F$ is a line in $M$. Let $G= E(M) - f - F$. By \ref{sublemma1}, $M|(F \cup f)$ is connected.  
Next we show  the following.

\begin{sublemma}\label{den1}
There is a line $g$ in $F$ such that $(M|(F \cup f)) \ba g$ or $(M|(F \cup f)) / g$ is connected. Moreover, $\sqcap(\{f\}, \{g\})  < 2$.
\end{sublemma}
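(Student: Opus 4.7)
The plan is to use Theorem~\ref{dennismain} to produce a candidate $g \in F$ satisfying the first assertion, and then to rule out $\sqcap_M(\{f\}, \{g\}) = 2$ by deriving a contradiction with the standing hypothesis that the theorem fails.

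First I would apply Theorem~\ref{dennismain} to the connected $2$-polymatroid $M|(F \cup f)$, which has at least two elements since $F$ is non-empty. This gives two distinct elements $x, y \in F \cup f$, each admitting a single-element deletion or contraction that remains connected. At least one of $x, y$, say $g$, lies in $F$; I would take this as the desired element. It is a line since every element of $F$ is a line in $M$.

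For the second assertion, I would argue by contradiction. Suppose $\sqcap_M(\{f\}, \{g\}) = 2$. Then $r_M(\{f, g\}) = r_M(\{f\}) = 2$, so $g$ is a loop of $M/f$. Since $F$ is a component of $M/f$ and a connected polymatroid with more than one element has no loops, this forces $F = \{g\}$. From $\lambda_{M/f}(\{g\}) = 0$ we obtain $r_M(E - g) = r_M(E)$, and because $g$ is a loop of $M/f$ we have $M/f \ba g = (M\ba g)/f$; hence $N$, which is a minor of $M/f$ with $g \notin E(N)$, is also a minor of $M \ba g$. If additionally $M \ba g$ is connected, then $e = g$ contradicts the standing hypothesis, completing the argument.

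The main obstacle is establishing the connectedness of $M \ba g$ under the assumption $\sqcap_M(\{f\}, \{g\}) = 2$. I would proceed by another contradiction: assume $(X, Y)$ is a non-trivial partition of $E - g$ with $r(X) + r(Y) = r(E - g) = r(E)$, and take $f \in X$ without loss of generality. Then $\sqcap(X, Y) = 0$ and $\sqcap(E - g, \{g\}) = 2$, and by Lemma~\ref{8.2.3}, $\sqcap(X, \{g\}) \ge \sqcap(\{f\}, \{g\}) = 2$. Substituting these into Lemma~\ref{oswrules} with $A = X$, $B = \{g\}$, $C = Y$ forces $\sqcap(X \cup g, Y) \le 0$, i.e., $\lambda_M(Y) = 0$, contradicting the connectedness of $M$.
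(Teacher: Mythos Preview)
Your proof is correct and follows the same approach as the paper: apply Theorem~\ref{dennismain} to $M|(F\cup f)$ to obtain $g\in F$, then show $\sqcap(\{f\},\{g\})<2$ by arguing that $\sqcap(\{f\},\{g\})=2$ forces $M\backslash g$ to be connected with $N$ as a minor, contradicting the standing assumption. The paper's proof is terse, simply asserting ``$M\backslash g$ is not connected, so $\sqcap(\{f\},\{g\})<2$'' without justifying the implication; your argument via Lemma~\ref{oswrules} supplies exactly this missing step. Two minor remarks: the fact that $M\backslash g$ has $N$ as a minor is already implicit in the setup (it is what allows \ref{sublemma-1} to be applied to every element of $F$), so your re-derivation of it is redundant though correct; and the observation $F=\{g\}$, while valid, is not actually needed for your connectivity argument, since $r(E-g)=r(E)$ follows directly from $f$ and $g$ being parallel lines.
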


By Theorem~\ref{dennismain}, $F$ contains an element $g$ such that $(M|(F \cup f)) \ba g$ or $(M|(F \cup f)) / g$ is connected.
As $g$ is in $F$,  we see that $g$ is  a line. Since the theorem fails,   $M \ba g$   is not connected, so $\sqcap(\{f\}, \{g\})  < 2$. Thus \ref{den1} holds.

%Let $G = E(M \ba f) - F$. Then $r_{M / f}(G) + r_{M / f} (F) = r(M / f)$. Since $f$ has rank $2$ in $M$, it follows that $r(G \cup f) - 2 + r(F \cup f) - 2) = r(M) - 2$. Therefore $r(G \cup f) + r(F \cup f) = r(M) + 2$. 
%Is r clear enough?

\begin{sublemma}\label{sublemma2}
$M|(G \cup f)$ is connected, $(M | (F \cup f)) / g$ is connected, and  $r(\{f, g\}) = 3$.
\end{sublemma}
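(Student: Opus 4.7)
The plan is to establish the three assertions of \ref{sublemma2} in order, using the tools developed.

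For $M|(G \cup f)$ connected, I would extend the argument of \ref{sublemma1}: its proof only uses that $F$ is a component of $M/f$, so the same reasoning shows that $M|(C \cup f)$ is connected for every component $C$ of $M/f$. Writing the components of $M/f$ other than $F$ as $C_2, \ldots, C_k$, so $G = C_2 \cup \cdots \cup C_k$, the restrictions $M|(C_i \cup f)$ all contain $f$ and are connected; iterated application of Corollary~\ref{xyconn} yields $M|(G \cup f)$ connected.

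For $(M|(F \cup f))/g$ connected, I would use \ref{den1} and argue by contradiction. If $(M|(F \cup f))\ba g = M|((F-g) \cup f)$ were connected, then together with $M|(G \cup f)$ (from the previous step) we would have two connected restrictions of $M \ba g$ sharing $f$, so Corollary~\ref{xyconn} gives $M \ba g$ connected. Since $g \in F$ and $F$ is a separator of $M/f$ disjoint from $E(N)$, the minor $N$ survives deletion of $g$: $N$ is a minor of $(M/f)\ba g$, hence of $M\ba g$. Then $g$ witnesses the conclusion of Theorem~\ref{mainone}, contradicting the failure assumption. So $(M|(F \cup f))/g$ is connected.

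For $r(\{f, g\}) = 3$: since $f, g$ are lines and $\sqcap(\{f\}, \{g\}) < 2$ by \ref{den1}, we have $r(\{f, g\}) \in \{3, 4\}$. To rule out $4$, I would suppose $\sqcap(\{f\}, \{g\}) = 0$ and show $M/g$ is connected; since $N$ is a minor of $M/g$, this contradicts the theorem's failure. The restriction $(M/g)|((F-g) \cup f) = (M|(F \cup f))/g$ is connected (by the previous step) and shares $f$ with $(M/g)|(G \cup f)$, so by Corollary~\ref{xyconn} it would suffice to show that $(M/g)|(G \cup f)$ is also connected. Its rank function is $r(Y \cup g) - 2$ for $Y \subseteq G \cup f$, and connectivity reduces to the strict inequality $\sqcap_M(Y_1 \cup g, Y_2 \cup g) > 2$ for each partition $(Y_1, Y_2)$ of $G \cup f$ with both parts nonempty.

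The main obstacle is exactly this last estimate in part (c): Lemma~\ref{8.2.3} already yields the weak inequality $\sqcap_M(Y_1 \cup g, Y_2 \cup g) \geq r(\{g\}) = 2$, so one must extract strictness from the interaction of $\sqcap(\{f\}, \{g\}) = 0$, the connectivity of $M|(G \cup f)$ (which gives $r(Y_1) + r(Y_2) \geq r(G \cup f) + 1$), and the submodular identities of Lemma~\ref{oswrules}, distinguishing carefully between the positions of $f$ and $g$ in the two halves of the partition.
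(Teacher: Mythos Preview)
Your arguments for the first two assertions are essentially the paper's: assemble $M|(G\cup f)$ from the connected pieces $M|(C_i\cup f)$ via Corollary~\ref{xyconn}, and rule out the connectedness of $(M|(F\cup f))\ba g$ by noting it would force $M\ba g$ connected by the same corollary.

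For $r(\{f,g\})=3$ you have the right overall plan (assume $\sqcap(\{f\},\{g\})=0$, show $(M/g)|(G\cup f)$ is connected, then glue via Corollary~\ref{xyconn} to get $M/g$ connected), but you leave the crucial step as an ``obstacle'' and propose attacking it partition-by-partition with submodular bookkeeping. That detour is unnecessary and you have not completed it. The missing observation is that under the hypothesis $\sqcap(\{f\},\{g\})=0$ one has $\sqcap_M(G\cup f,\{g\})=0$, and hence $(M/g)|(G\cup f)=M|(G\cup f)$ outright, which you already know is connected. To see this, use that $g\in F$ lies in a component of $M/f$ disjoint from $G$, so $\sqcap_{M/f}(G,\{g\})=0$; a one-line computation (or an application of Lemma~\ref{oswrules} with $A=\{f\}$, $B=G$, $C=\{g\}$) gives
\[
\sqcap_M(G\cup f,\{g\})=\sqcap_{M/f}(G,\{g\})+\sqcap_M(\{f\},\{g\})=0.
\]
Then for every $Y\subseteq G\cup f$ we get $\sqcap(Y,\{g\})\le\sqcap(G\cup f,\{g\})=0$ by Lemma~\ref{8.2.3}, so $r_{M/g}(Y)=r_M(Y)$, and the restrictions agree. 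This is exactly how the paper closes the argument; once you insert it, your proof is complete and matches the paper's.
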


To see this, first note that, by \ref{sublemma1},   $M|(K \cup f)$ is connected for each component $K$ of $M/f$.  Then, by Corollary~\ref{xyconn}, 
 $M|(G \cup f)$ is connected. 
The same argument shows   that  $(M|(F \cup f))\ba g$ is disconnected for if it is connected, then so is $M\ba g$; a contradiction.  
Thus, by \ref{den1}, $(M | (F \cup f)) / g$ is connected. 

Now suppose that  $\sqcap_M(\{f\},\{g\}) = 0$. Then, as $\sqcap_{M/f}(G,\{g\}) = 0$, one easily checks that $\sqcap_M(G \cup f,\{g\}) = 0$. Hence $M|(G \cup f) = (M|(G \cup f \cup g))/ g = (M/g)|(G \cup f)$. Thus, as $(M|(F \cup f))/ g$ and $M|(G \cup f)$ are connected and both contain $f$, Corollary~\ref{xyconn} implies that $M/ g$ is connected; a contradiction.   Hence \ref{sublemma2} holds.

Recall that $f$ is a line of $M$ such that $M \ba f$ and $M / f$ are disconnected. Moreover, $F$ is a component of $M/f$ and $E(N)\subseteq  G =E(M) - f - F$. Let $A$ be a component of $M \ba f$  avoiding $E(N)$ and let $B = E(M) -f - A$.  
The next two observations follow because $M$ is connected.

\begin{sublemma}\label{nospan}
Neither $A$ nor $B$ spans $f$. 
\end{sublemma}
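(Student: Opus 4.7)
The plan is to argue by contradiction: suppose that $A$ spans $f$, and deduce a $1$-separation of $M$, contradicting connectedness; the symmetric argument then handles $B$.

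Since $A$ is a component of $M \ba f$, we have $\sqcap(A, B) = 0$. By \ref{sublemma0}, $r(E - f) = r(E)$, so
$$\sqcap(A \cup B, \{f\}) = r(E - f) + r(\{f\}) - r(E) = 2.$$
The assumption that $A$ spans $f$ is exactly $r(A \cup f) = r(A)$, i.e., $\sqcap(A, \{f\}) = r(\{f\}) = 2$. Applying Lemma~\ref{oswrules} with the triple $(A, B, \{f\})$ gives
$$\sqcap(A \cup B, \{f\}) + \sqcap(A, B) = \sqcap(A \cup \{f\}, B) + \sqcap(A, \{f\}),$$
and substituting the known values $2$, $0$, $2$ forces $\sqcap(A \cup \{f\}, B) = 0$. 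Because $N$ is non-empty (from the opening lines of the proof of Theorem~\ref{mainone}) and $E(N) \subseteq B$, the set $B$ is a non-empty proper subset of $E(M)$, so the equation $\sqcap(A \cup \{f\}, B) = 0$ would exhibit $(A \cup f, B)$ as a $1$-separation of $M$, contradicting connectedness.

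The case that $B$ spans $f$ is disposed of by the identical computation after swapping the roles of $A$ and $B$ in Lemma~\ref{oswrules}. I do not foresee any serious obstacle: the only real choice is to recognize Lemma~\ref{oswrules} as the correct identity for shifting $f$ from one side of a local-connectivity expression to the other while tracking the numerical data already available from \ref{sublemma0} and from the fact that $A$ is a component of $M \ba f$. Once that identity is selected, the remaining steps are arithmetic.
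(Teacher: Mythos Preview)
Your argument is correct and follows the same idea the paper has in mind: if $A$ (or $B$) spanned $f$, the $1$-separation $(A,B)$ of $M\backslash f$ would lift to a $1$-separation $(A\cup f,B)$ (or $(A,B\cup f)$) of $M$. The paper records this as an immediate consequence of connectedness without invoking Lemma~\ref{oswrules}; the direct computation $r(A\cup f)+r(B)=r(A)+r(B)=r(E-f)=r(E)$ (the last equality by \ref{sublemma0}) already yields the contradiction, so the local-connectivity identity is more machinery than the situation requires.
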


\begin{sublemma}\label{notskew}
$r(G \cup f) < r(G) + 2$ and $r(F \cup f) < r(F) + 2$.
\end{sublemma}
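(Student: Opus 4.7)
The plan is to derive both inequalities simultaneously from a single equation coming from the component structure of $M/f$, combined with the connectedness hypothesis on $M$.

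First I would exploit the fact that $F$ is a component of $M/f$. Since $G = E(M) - f - F$ is the union of the remaining components (together with any loops) of $M/f$, the set $F$ is $1$-separating in $M/f$; that is, $\lambda_{M/f}(F) = 0$. Expanding $r_{M/f}(F) + r_{M/f}(G) = r(M/f)$ using the definition of contraction and the fact that $f$ is a line (so $r(\{f\}) = 2$) yields the key identity
\[
r(F \cup f) + r(G \cup f) = r(E) + 2.
\]

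Next, because $M$ is connected and $(G, F \cup f)$ is a non-trivial partition of $E(M)$ (with $G$ non-empty since it contains $E(N)$, and $F \cup f$ non-empty), we have
\[
r(G) + r(F \cup f) > r(E).
\]
Subtracting this strict inequality from the equation above gives $r(G \cup f) - r(G) < 2$, which is the first inequality of \ref{notskew}. For the second, the same connectedness argument applied to the partition $(F, G \cup f)$ gives $r(F) + r(G \cup f) > r(E)$, and subtracting from the key identity now yields $r(F \cup f) - r(F) < 2$.

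There is really no obstacle here; the entire proof is arithmetic on the rank function, and the crux is simply matching up the equality furnished by the component condition with the strict inequality furnished by connectedness.
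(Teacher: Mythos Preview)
Your argument is correct and is precisely the intended expansion of the paper's one-line justification ``follows because $M$ is connected.'' The key identity $r(F\cup f)+r(G\cup f)=r(E)+2$, obtained from $\lambda_{M/f}(F)=0$ together with $r(\{f\})=2$, combined with the strict inequalities $\lambda_M(G)\ge 1$ and $\lambda_M(F)\ge 1$, is exactly what is needed; the paper simply suppresses these two lines of arithmetic.
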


%Since $M$ is connected, either $A$ nor $B$ spans $f$; otherwise $(A \cup f, B)$ or $(A, B \cup f)$ is a $1$-separation of $M$. Moreover, $r(G \cup f) < r(G) + 2$ and $r(F \cup f) < r(F) + 2$, or else $(F \cup f, G)$ or $(F, G\cup f)$ is a $1$-separation of $M$.

Next we show the following.

\begin{sublemma}\label{notempty}
At least one of $A \cap G$, $A \cap F$, $B \cap F$, and $B \cap G$ is empty.
\end{sublemma}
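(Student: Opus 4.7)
My plan is a proof by contradiction: assume all four of $P := A \cap F$, $Q := A \cap G$, $R := B \cap F$, and $S := B \cap G$ are nonempty, and derive that $M|(G \cup f)$ must be disconnected, contradicting \ref{sublemma2}. Two global facts drive everything. First, $\sqcap(A, B) = 0$: $A$ is a component of $M \ba f$ and $r(E-f) = r(M)$ by \ref{sublemma0}, so Lemma~\ref{8.2.3} forces $\sqcap(P, R) = \sqcap(P, S) = \sqcap(Q, R) = \sqcap(Q, S) = 0$. Second, $\sqcap_{M/f}(F, G) = 0$ since $F$ is a component of $M/f$, so Lemma~\ref{8.2.3} forces $\sqcap_{M/f}(X, Y) = 0$ whenever $X \subseteq F$ and $Y \subseteq G$. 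My workhorse is the identity
\[
\sqcap_{M/f}(X, Y) = \sqcap(X, Y) + \sqcap(X \cup Y, \{f\}) - \sqcap(X, \{f\}) - \sqcap(Y, \{f\})
\]
for disjoint $X, Y \subseteq E - f$, which follows by expanding $r_{M/f}(Z) = r(Z \cup f) - 2$ and $r(Z \cup f) = r(Z) + 2 - \sqcap(Z, \{f\})$.

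Step one: pin down the $\sqcap$-values on $A$. Applying the identity with $(P, Q, A)$, the left-hand side is $0$, so $\sqcap(P, Q) = \sqcap(P, \{f\}) + \sqcap(Q, \{f\}) - \sqcap(A, \{f\})$; combined with $\sqcap(P, \{f\}), \sqcap(Q, \{f\}) \leq \sqcap(A, \{f\})$ (Lemma~\ref{8.2.3}), this gives $\sqcap(P, Q) \leq \sqcap(A, \{f\})$. Since $M|A$ is connected with $P, Q$ nonempty, $\sqcap(P, Q) \geq 1$; since $M$ is connected, $\lambda_M(B) = 2 - \sqcap(A, \{f\}) \geq 1$ forces $\sqcap(A, \{f\}) \leq 1$. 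These bounds collapse to $\sqcap(A, \{f\}) = \sqcap(P, \{f\}) = \sqcap(Q, \{f\}) = \sqcap(P, Q) = 1$.

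Step two: pin down the $\sqcap$-values on $F$ and $G$. Applying the identity with $(P, R, F)$, and using $\sqcap_{M/f}(P, R) \geq 1$ (from $(M/f)|F$ connected), yields $\sqcap(F, \{f\}) \geq 2 + \sqcap(R, \{f\})$; the ceiling $\sqcap(F, \{f\}) \leq r(\{f\}) = 2$ forces $\sqcap(F, \{f\}) = 2$ and $\sqcap(R, \{f\}) = 0$. Applying the identity with $(R, S, B)$ and using $\sqcap(S, \{f\}) \leq \sqcap(B, \{f\})$ forces $\sqcap(R, S) = 0$. A direct rank computation using $\sqcap(P, R) = \sqcap(Q, S) = 0$ and $r(A) + r(B) = r(M)$ gives $\sqcap(F, G) = \sqcap(P, Q) + \sqcap(R, S) = 1$, and the identity with $(F, G, E - f)$, noting $\sqcap(E - f, \{f\}) = 2$, then gives $\sqcap(G, \{f\}) = \sqcap(F, G) = 1$.

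The contradiction comes cleanly from Lemma~\ref{oswrules}. Taking its triple $(A, B, C)$ to be $(Q, \{f\}, S)$, we get $\sqcap(Q \cup f, S) + \sqcap(Q, \{f\}) = \sqcap(G, \{f\}) + \sqcap(Q, S)$, so $\sqcap(Q \cup f, S) = 1 + 0 - 1 = 0$. Since $Q \cup f$ and $S$ are nonempty complementary subsets of $G \cup f$, this exhibits $M|(G \cup f)$ as disconnected, contradicting \ref{sublemma2}. The main obstacle is recognising and deriving the identity that converts between $\sqcap_{M/f}$ and $\sqcap$; once it is in hand, the argument reduces to tight integer bookkeeping aided by the ceiling $\sqcap(\cdot, \{f\}) \leq 2$ that comes from $f$ being a line.
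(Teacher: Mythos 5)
Your argument is correct, and it reaches the conclusion by a genuinely different route from the paper. The paper adds the two equalities $r(A)+r(B)=r(E)$ and $r(F\cup f)+r(G\cup f)=r(E)+2$, applies submodularity to show that each of the four corner sets $A\cap F$, $A\cap G$, $B\cap F$, $B\cap G$ is $2$-separating, and then invokes Propositions~\ref{dennis3.6} and~\ref{connconn} to decompose $M$ as repeated $2$-sums onto a five-element core $Q$ on $\{f,p_{AF},p_{AG},p_{BF},p_{BG}\}$; the contradiction comes from tracking which basepoints are parallel in $Q$ and in $Q/f$, forcing the component of $M/f$ containing $F$ to swallow $A\cap G$. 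You instead stay entirely at the level of local connectivity: your conversion identity between $\sqcap_{M/f}$ and $\sqcap_M$ is a rearrangement of Lemma~\ref{oswrules} specialized to $C=\{f\}$, and I have checked each of your numerical deductions ($\sqcap(A,\{f\})=\sqcap(A\cap F,\{f\})=\sqcap(A\cap G,\{f\})=\sqcap(A\cap F,A\cap G)=1$, then $\sqcap(F,\{f\})=2$, $\sqcap(B\cap F,\{f\})=0$, $\sqcap(B\cap F,B\cap G)=0$, $\sqcap(F,G)=\sqcap(G,\{f\})=1$, and finally $\sqcap((A\cap G)\cup f,\,B\cap G)=0$); they are all valid, the last one exhibiting a $1$-separation of $M|(G\cup f)$ that contradicts \ref{sublemma2}. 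Your approach buys self-containedness: it avoids the $2$-sum machinery, the natural matroid, and the parallel-point analysis, replacing them with integer bookkeeping whose only external inputs are Lemmas~\ref{8.2.3} and~\ref{oswrules}, the connectivity facts already established in \ref{sublemma0}--\ref{sublemma2}, and the ceiling $\sqcap(\cdot,\{f\})\le r(\{f\})=2$. The paper's approach buys structural information (the explicit $2$-sum tree decomposition of $M$) that is in the spirit of the rest of the argument, and it does not lean on \ref{sublemma2}, whereas your contradiction does; since \ref{sublemma2} is established before \ref{notempty} and does not depend on the hypothesis being contradicted, that reliance is legitimate. One stylistic caution: you reuse the letter $Q$ for $A\cap G$ while the paper's proof of this same sublemma uses $Q$ and $Q_0$ for the core polymatroids, so the notation should be changed if your argument were substituted into the text.
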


Suppose that all four intersections   are non-empty. By \ref{sublemma0}, $r(E - f) = r(E)$. Thus $r(A) + r(B) = r(E)$ and 
$r(F \cup f) + r(G \cup f) = r(E) + 2$. Adding these two equations and applying submodularity to the left-hand side gives 
$$r(A \cup F \cup f) + r(A \cap F) + r(B \cup G \cup f) + r(B \cap G) \le 2r(E) + 2,$$ so 
\begin{equation}
\label{eq2}
[r(A \cup F \cup f) + r(B  \cap G)] + [r(B \cup G \cup f) + r(A \cap F)] \le 2r(E) + 2.
\end{equation}

As $(A \cup F \cup f, B\cap G)$ and $(B\cup G \cup f, A\cap F)$ are partitions of $E(M)$, we deduce, since $M$ is connected, that equality holds in (\ref{eq2}). Hence the two specified partitions are $2$-separating in $M$. By symmetry, so are $(A \cup G \cup f, B\cap F)$ and $(B\cup F \cup f, A\cap G)$. By   Propositions~\ref{dennis3.6} and  \ref{connconn},  $M$ can be written as the 2-sum with basepoint $p_{AF}$ of two connected $2$-polymatroids, one with ground set $(A\cap F) \cup p_{AF}$ and the other, $Q_0$, with ground set $(E(M) - (A \cap F)) \cup p_{AF}$. By arguing in terms of the natural matroid derived from $M$, it is straightforward to check that, in $Q_0$, each of  $A\cap G, B\cap F$, and $B\cap G$ is $2$-separating.  Hence we can decompose $Q_0$ as a $2$-sum of two connected 2-polymatroids one with ground set $(A\cap G) \cup p_{AG}$. Repeating this process twice more, we obtain a connected 2-polymatroid $Q$ with ground set $\{f,p_{AF},p_{AG}, p_{BF},p_{BG}\}$ where $M$ is obtained from $Q$ by attaching, via 2-sums, connected $2$-polymatroids with ground sets 
$(A\cap F) \cup p_{AF}$, $(A\cap G) \cup p_{AG}$, $(B\cap F) \cup p_{BF}$, and $(B\cap G) \cup p_{BG}$.

As $M|A$ is connected, Proposition~\ref{connconn} implies that  $p_{AG}$ and $p_{AF}$ are parallel in $Q$.  Since $(M / f)|F$  is connected,    $p_{BF}$ and $p_{AF}$ are   parallel in $Q / f$. But  $p_{AG}$ and $p_{AF}$ are also  parallel in $Q / f$ unless they are loops. In the exceptional case,     $A \cap F$ contains a component of $M/f$; a contradiction. We deduce that the component of $M / f$ containing $F$   also contains $A \cap G$; a contradiction.  Thus \ref{notempty} holds. 

By \ref{notempty}, $A$ or $B$ is contained in   $F$ or $G$, and   $F$ or $G$ is contained in  $A$ or $B$. We know that $B \cap G$ is non-empty because it contains $E(N)$.

Suppose both $F$ and $G$ span $f$. Then    $A$ or $B$ spans $f$; a contradiction to   \ref{nospan}. By \ref{notskew}, there are two remaining cases to consider:

\begin{itemize}
\item[(i)]  $r(F \cup f) = r(F) + 1$; and
\item[(ii)] $r(F \cup f) = r(F)$ and $r(G \cup f) = r(G) + 1$.
\end{itemize}

By \ref{sublemma2},   $(M | (F \cup f) )/ g$ is connected  and $\sqcap(\{f\},\{g\}) = 1$. Thus $r(\{f,g\}) = 3$. Assume (i) holds. Then 
 $\sqcap_{M/g}(F-g,\{f\}) = r(F) + r(\{f,g\}) - r(F \cup f) - r(\{g\})= 0$. Thus $\{f\}$ is  a component of $(M|(F \cup f))/g$. As the last polymatroid is connected, we deduce that $F = \{g\}$. Thus $M\ba g = M|(G \cup f)$ so, by \ref{sublemma2}, $M\ba g$ is connected; a contradiction.

We now know that (ii) holds. As neither $A$ nor $B$ spans $f$,  neither has $F$ as a subset. Thus both $A \cap F$ and $B \cap F$ are  non-empty. As $B \cap G$   is non-empty,  \ref{notempty} implies that $A \cap G$ is empty. Then $G \subseteq B$. But 
$r(G \cup f) = r(G) + 1$. Therefore $r(B \cup f) \le r(B) + 1$. Since $r(B \cup f) \neq r(B)$, it follows that $r(B \cup f) = r(B) + 1$.

We have $r(A) + r(B) = r(M\ba f)$, and, by \ref{sublemma0}, $r(M\ba f) = r(M)$. As $r(B \cup f) = r(B) + 1$, we deduce that 
$r(A) + r(B \cup f) = r(M) + 1$, so $M$ can be written as a $2$-sum with basepoint $p$ of two connected 2-polymatroids with ground sets $A \cup p$ and $B\cup f \cup p$. Let the former  be $M_1$. 

Suppose $M_1$ has at least three elements.
In the first case, by Lemma~\ref{new2}(\ref{new2i}), $M \ba q$ is the $2$-sum of the two connected $2$-polymatroids each with at least two elements, so by Proposition~\ref{connconn}, $M \ba q$ is connected. Now assume that $M_1 / q$ is connected. Then $\sqcap_{M_1} (\{q\}, \{p\}) = 0$ otherwise $p$ is a loop of $M_1 / q$, a contradiction. Hence $\sqcap_M (\{q\}, E_2) = 0$ as, by Proposition~\ref{dennis3.6},
\begin{equation*}
\begin{split}
\sqcap_M(\{q\}, E_2) &= r(\{q\}) + r(E_2) - r(E_2 \cup q) \\
&= r(\{q\}) + r(E_2) - r_{M_1}(\{p, q\}) - r(E_2) + 1 \\
&= r(\{q\}) + r(\{p\}) - r_{M_1}(\{p, q\}) \\
&= \sqcap_{M_1} (\{q\}, \{p\}).
\end{split}
\end{equation*}

Thus, by Lemma~\ref{new2}(\ref{new2ii}) and Proposition~\ref{connconn}, $M / q$ is connected since we again that the $2$-sum of two connected $2$-polymatroids with at least two elements.

% Then, by Theorem~\ref{dennismain}, $M_1$ has an element $q$ such that $q \neq p$ and $M_1\ba q$ or $M_1/q$ is connected. Observe that if $M_1/q$ is connected, then $\sqcap(\{q\},\{p\}) = 0$  otherwise $p$ is a loop in the contraction; a contradiction.  It follows, by \cite[Lemma 4.3]{osw2}, that $M\ba q$ or $M/q$ is the $2$-sum of two connected $2$-polymatroids each with at least two elements. Thus, 
%by Proposition~\ref{connconn},   $M\ba q$ or $M/q$ is connected. 
As $q$ is in $A$ and hence in $F$,  both $M\ba q$ and $M/q$ have $N$ has a minor and so we obtain a contradiction.

We may now assume that $M_1$ consists of a single line $a$ through $p$. 

\begin{sublemma}\label{mb}
$M/a$ is connected. 
\end{sublemma}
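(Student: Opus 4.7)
The plan is to identify $M/a$ with $M_2/p$ via the 2-sum rank formula, then derive a contradiction from the assumption that $M/a$ is disconnected, using the fact that $\{a\}$ is a component of $M \ba f$ (so $\sqcap_M(\{a\}, B) = 0$).

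First, I would verify $M/a = M_2/p$ by computing rank functions. Since $a$ is a line through $p$ in $M_1$, the parallel-connection rank formula from Proposition~\ref{dennis3.6} collapses: for every $X \subseteq B \cup f$, submodularity gives $r_2(X \cup p) \le r_2(X) + 1$, so $r_M(X \cup a) = \min\{2 + r_2(X), 1 + r_2(X \cup p)\} = r_2(X \cup p) + 1$. Subtracting $r_M(\{a\}) = 2$ yields $r_{M/a}(X) = r_{M_2/p}(X)$, so $M/a = M_2/p$.

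Next, I would assume for contradiction that $M_2/p$ is disconnected, witnessed by a partition $(Y_1, Y_2)$ of $B \cup f$ into non-empty sets with $r_2(Y_1 \cup p) + r_2(Y_2 \cup p) = r_2(M_2) + 1$. Since $M_2$ is connected (as noted when $M$ was just written as a 2-sum), we have $\lambda_{M_2}(Y_i \cup p) \ge 1$ for $i=1,2$. Combining these inequalities with the displayed equation forces each $Y_i$ to span $p$ in $M_2$, so $r_2(Y_i \cup p) = r_2(Y_i) = r_M(Y_i)$; the 2-sum rank formula then gives $r_M(Y_i \cup a) = r_M(Y_i) + 1$, i.e., $\sqcap_M(Y_i, \{a\}) = 1$ for $i = 1, 2$.

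To finish, I would use that $\{a\}$ is a component of $M\ba f$, hence $\sqcap_M(\{a\}, B) = 0$. Since $(Y_1, Y_2)$ partitions $B \cup f$ and $f$ lies in exactly one part, the other $Y_i$ is a subset of $B$, so Lemma~\ref{8.2.3} yields $\sqcap_M(Y_i, \{a\}) \le \sqcap_M(B, \{a\}) = 0$, contradicting $\sqcap_M(Y_i, \{a\}) = 1$. The main subtlety is bridging from the assumed disconnection of $M/a$ to the 2-separation-style structure in $M_2$; once each $Y_i$ is shown to span $p$ in $M_2$, the contradiction is a one-line application of Lemma~\ref{8.2.3}.
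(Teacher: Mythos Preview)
Your argument is correct. Both proofs end with the same punchline: the side of the separation of $M/a$ that avoids $f$ lies in $B$, and $\sqcap_M(\{a\},B)=0$ kills it. The paper, however, reaches this in three lines without ever invoking the $2$-sum structure or $M_2$: it takes a $1$-separating partition $(V,W)$ of $M/a$ with $f\in V$, observes $W\subseteq B$, uses $\sqcap_M(\{a\},B)=0$ to get $r_{M/a}(W)=r_M(W)$, and then $r_M(V\cup a)+r_M(W)=r(M)$ contradicts connectedness of $M$. Your route---identifying $M/a=M_2/p$, using connectedness of $M_2$ to force each $Y_i$ to span $p$, and translating this back to $\sqcap_M(Y_i,\{a\})=1$---is sound and makes the role of the $2$-sum explicit, but it is substantially longer and brings in machinery (the rank formula for $P(M_1,M_2)$ and connectedness of $M_2$) that the direct argument avoids entirely.
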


Assume $M/a$ is disconnected. Then its ground set has a partition $(V,W)$ such that 
$r_{M/a}(V) + r_{M/a}(W) = r(M/a)$.  Now we may assume that $f$ is in $V$. Thus $W \subseteq B$ since $A = \{a\}$.   As $\sqcap_M(A,B) = 0$, it follows that  $r_{M/a}(W) = r_M(W)$. Hence $r_{M}(V\cup a) + r_{M}(W) = r(M)$; a contradiction. We conclude that \ref{mb} holds.

As $a \in F$, we know that $M/a$ has $N$ as a minor. Thus we have a contradiction that completes the proof of the theorem. 
\end{proof}

The argument above relies heavily on  the fact that we have a $2$-polymatroid. However, we believe that the main theorem also holds for $k$-polymatroids for all $k > 2$.

\begin{conjecture}
\label{k>2}
Let $M$ be a connected $k$-polymatroid and let $N$ be a connected minor of $M$. When $N \neq M$, there is an element $e$ of $E(M) - E(N)$ such that $M \ba e$ or $M / e$ is connected having $N$ as a minor.
\end{conjecture}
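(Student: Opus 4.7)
The plan is to mirror the proof of Theorem~\ref{mainone} while generalizing each of its ingredients to the $k$-polymatroid setting. First I would extend the natural matroid construction: given a $k$-polymatroid $M$, replace each element $e$ by $r_M(\{e\})$ freely placed points spanning a flat of rank $r_M(\{e\})$, and delete $e$ itself; call the resulting matroid $M'$. The analog of Lemma~\ref{natural} should go through by essentially the same argument, so that $M$ is connected if and only if $M'$ is connected. This reduction is useful because Hall's Theorem~\ref{dennismain}, which provides two distinct elements whose deletion or contraction is connected, needs to be promoted to $k$-polymatroids, and this is plausibly derivable by combining the construction of $M'$ with the matroid Brylawski--Seymour theorem.

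Next, I would address the structural decomposition. The $2$-polymatroid proof pivots on Proposition~\ref{dennis3.6}, which writes a $2$-separating partition as a parallel connection across a rank-$1$ basepoint. In the $k$-polymatroid setting Hall also defines generalized parallel connections over basepoints of higher rank, and the natural analog should be that a partition $(X_1, X_2)$ with $r(X_1) + r(X_2) = r(E) + j$ yields a decomposition of $M$ across a basepoint of rank $j$. I would formalise this along with the corresponding analog of Proposition~\ref{connconn}, so that connectedness is preserved under decomposition and reassembly. With these tools in hand, the opening moves of the proof of Theorem~\ref{mainone} --- identifying a component $F$ of $M/f$ avoiding $E(N)$, picking $g \in F$ with $(M|(F \cup f))/g$ or $(M|(F \cup f))\backslash g$ connected, and separating into cases based on $r(F \cup f) - r(F)$ --- should carry over after modest bookkeeping.

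The main obstacle is the final case analysis. For $2$-polymatroids, Sublemma~\ref{sublemma-1} forces $f$ to be a line via Lemma~\ref{new1}, and the proof splits cleanly between $r(F \cup f) = r(F) + 1$ and $r(F \cup f) = r(F)$. For $k$-polymatroids, $r_M(\{f\})$ can be anything from $2$ up to $k$, and $r(F \cup f) - r(F)$ ranges over correspondingly more values. The counting argument around equation~(\ref{eq2}), which currently reads $r(F \cup f) + r(G \cup f) = r(E) + 2$, would generalise to $r(E) + r_M(\{f\}) + s$ with some nonnegative slack $s$; forcing the four-fold decomposition into pieces $(A\cap F) \cup p_{AF}$, $(A\cap G) \cup p_{AG}$, $(B\cap F) \cup p_{BF}$, $(B\cap G) \cup p_{BG}$ then requires an extended chain of higher-rank parallel connections whose combinatorial structure is more subtle. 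A plausible simplification is induction on $k$, whereby one attempts to produce from $M$ a connected $(k-1)$-polymatroid $M^\sharp$ with a connected minor $N^\sharp$ for which the inductive element lifts back to an element of $E(M) - E(N)$ witnessing the theorem for $(M, N)$; the principal difficulty is ensuring this lift preserves both the minor $N$ and the connectedness of the single-element deletion or contraction, which is where most of the technical work should concentrate.
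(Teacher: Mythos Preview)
The statement you are attempting to prove is presented in the paper as a \emph{conjecture}, not a theorem; the paper offers no proof of it. Indeed, the sentence immediately preceding the conjecture reads: ``The argument above relies heavily on the fact that we have a $2$-polymatroid. However, we believe that the main theorem also holds for $k$-polymatroids for all $k > 2$.'' So there is nothing in the paper to compare your proposal against, and the authors themselves flag that their method does not obviously extend.

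Your proposal is a plan, not a proof, and you are candid about this: you identify the final case analysis as ``the main obstacle'' and then sketch an inductive reduction on $k$ whose ``principal difficulty'' you leave unresolved. That is an honest assessment, but it means the proposal does not establish the conjecture. A few specific points are worth noting. First, Hall's Theorem~\ref{dennismain} is already stated for $k$-polymatroids in \cite{hall}, so that step needs no new work. Second, the $2$-sum decompositions in the proof of Theorem~\ref{mainone} are across $2$-separations (that is, $\lambda = 1$), and Propositions~\ref{dennis3.6} and~\ref{connconn} apply to $k$-polymatroids as written; you do not need higher-rank basepoints for that part. The genuine obstruction is exactly where you locate it: once $r_M(\{f\})$ can exceed $2$, Sublemma~\ref{sublemma0} (which uses that $f$ corresponds to two points in $M'$), the dichotomy between cases (i) and (ii), and the arithmetic in case (i) (which uses $r(\{f,g\}) = 3$ and $r(\{g\}) = 2$) all break, and the number of cases proliferates. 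Your proposed induction on $k$ via an auxiliary $(k-1)$-polymatroid $M^\sharp$ is not fleshed out enough to assess; absent a concrete construction of $M^\sharp$ and a verification that the element it produces lifts correctly, this remains a hope rather than an argument.
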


\section{Uniqueness}
\label{uniq}

By Theorem~\ref{mainone}, for every connected $2$-polymatroid $M$ and every connected proper minor $N$ of $M$, we can remove the elements of $E(M) - E(N)$ one at a time maintaining a connected $2$-polymatroid with $N$ as a minor. In this section, we consider what can be said about the uniqueness of this sequence of element removals

Now let  $M$ be a connected polymatroid and $N$ be a connected proper minor of $M$. An {\it admissible ordering} of $E(M) - E(N)$ is an ordering $(a_1,a_2,\dots,a_n)$ of the set $E(M) - E(N)$ such that, for each $k$ in $\{1,2,\dots,n\}$, there is a connected minor $M_k$ of $M$ with ground set $E(M) - \{a_1,a_2,\dots,a_k\}$ such that $M_k$ is a minor of $M_{k-1}$, where $M_0 = M$ and $M_n = N$. We give an example below to show that an admissible ordering may be unique. We shall show, however, that we always retain some flexibility with respect to the way in which the elements are removed unless $|E(M) - E(N)| = 1$. Formally, a {\it constrained admissible ordering}  is an ordering 
$((\alpha_1, a_1), (\alpha_2, a_2), \ldots, (\alpha_n, a_n))$
%$((a_1,\alpha_1),(a_2,\alpha_2),\dots,(a_n,\alpha_n))$ 
such that $E(M) - E(N) = \{a_1,a_2,\dots,a_n\}$ where each $\alpha_i$ is a deletion or contraction operation, and, for each $k$ in $\{1,2,\dots,n\}$, there is a connected minor $M_k$ of $M$ with ground set 
$E(M) - \{a_1,a_2,\dots,a_k\}$ where $M_k$ is obtained from $M_{k-1}$ by removing $a_k$ by the operation designated by $\alpha_k$, and $(M_0,M_n) = (M,N)$.

To construct a $2$-polymatroid with a unique admissible ordering, let $N$ be a simple non-empty connected matroid. Take $N \oplus U_{n,n}$ where the ground set of $U_{n,n}$ is $\{ b_0, b_1, \ldots, b_{n-1} \}$. Take $b_n \in E(N)$ and consider the $2$-polymatroid $M$ whose ground set is $E(N) \cup \{f_i : 1 \leq i \leq n \}$ where $f_i = \{b_{i-1}, b_i \}$ for all $i$, and the rank function of $M$ is induced by that of $N \oplus U_{n,n}$. Then $M$ is connected, $M \ba f_1, f_2, \ldots, f_k$ is connected for all $k$ in $\{1, 2, \ldots, n\}$, and $M \ba f_1, f_2, \ldots, f_n = N$. Thus $(f_1, f_2, \ldots, f_n)$ is an admissible ordering of $E(M) - E(N)$. It is not difficult to check that the admissible ordering is unique. Note, however, that $M \ba f_1 \ba f_2 = M / f_1 \ba f_2$, so this example does not give us a unique constrained admissible ordering. Indeed, as the next result shows, except in the trivial case, there can never be such a unique ordering.

\begin{theorem}
\label{uniqone}
Let $M$ be a connected $2$-polymatroid   and $N$ be a  connected proper minor of $M$. Then there is a unique constrained admissible ordering of $E(M) - E(N)$ if and only if  
$|E(M) - E(N)| = 1$.
\end{theorem}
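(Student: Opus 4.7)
First, I would handle the easy direction.  Suppose $|E(M) - E(N)| = 1$ and write $E(M) - E(N) = \{e\}$: the only candidates for a constrained admissible ordering are $((\ba, e))$ and $((/, e))$, and Theorem~\ref{mainone} guarantees at least one is valid.  I would argue that they cannot both be valid.  If both gave $N$ then $M\ba e = N = M/e$ as labelled $2$-polymatroids, forcing $r_M(X \cup \{e\}) = r_M(X) + r_M(\{e\})$ for every $X \subseteq E(M) - e$.  In particular $\lambda_M(\{e\}) = 0$; since $N$ is nonempty, $|E(M)| \ge 2$, contradicting the connectedness of $M$.  Thus exactly one ordering is valid.

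For the converse, my plan is to induct on $n = |E(M) - E(N)| \ge 2$.  When $n \ge 3$, the induction step is immediate: apply Theorem~\ref{mainone} to pick any valid first step $(\alpha_1, a_1)$, observe that $M_1 = M\alpha_1 a_1$ is a connected $2$-polymatroid with $N$ a proper minor and $|E(M_1) - E(N)| = n - 1 \ge 2$, and apply the inductive hypothesis to $(M_1, N)$ to get two distinct constrained admissible orderings of $E(M_1) - E(N)$, which prepend to two distinct constrained admissible orderings of $E(M) - E(N)$.

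The weight of the argument falls on the base case $n = 2$, which I would handle by contradiction.  Assume $E(M) - E(N) = \{a, b\}$ and that $((\alpha, a), (\beta, b))$ is the unique valid ordering.  By the already-proved forward direction applied to $(M\alpha a, N)$, the operation $\beta$ is uniquely forced by $M\alpha a$, so uniqueness of the full ordering is equivalent to uniqueness of the valid first step.  I would then observe that $M\beta b$ must be disconnected, since otherwise $((\beta, b), (\alpha, a))$ would furnish a second valid ordering (its outcome is $M\beta b\alpha a = M\alpha a\beta b = N$ by commutativity of deletion and contraction on disjoint elements).  The key idea is to exploit this disconnectedness to produce a second valid first step of the form $(\bar{\alpha}, a)$: if $\{a\}$ is a component of $M\beta b$ then $\lambda_{M\beta b}(\{a\}) = 0$, so Lemma~\ref{newt} gives $(M\beta b)\ba a = (M\beta b)/a$, and hence $M\bar{\alpha}a\beta b = M\alpha a\beta b = N$; provided $M\bar{\alpha}a$ is also connected, $((\bar{\alpha}, a), (\beta, b))$ is the required second valid ordering, a contradiction.

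The hardest part will be forcing both that $\{a\}$ is a component of $M\beta b$ and that $M\bar{\alpha}a$ is connected.  I plan to apply Theorem~\ref{dennismain} to $M$ to obtain a second element $z \in E(M)$ whose single-element removal is connected; combined with the $1$-separation of $M\beta b$, this should yield a $2$-separation of $M$ to which Propositions~\ref{dennis3.6} and~\ref{connconn} apply, producing a $2$-sum decomposition of $M$ and describing the connectedness of its summands.  Lemma~\ref{new2} would then govern how $\ba$ and $/$ on $a$ distribute over this $2$-sum, forcing both the isolation of $\{a\}$ in $M\beta b$ and the connectedness of $M\bar{\alpha}a$, while the dual case where $a$ initially lies in a larger component of $M\beta b$ meeting $E(N)$ should be ruled out by a symmetric $2$-sum analysis contradicting the connectedness of $N$ as a minor of $M\alpha a$.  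The principal technical difficulty will be bookkeeping $N$-minor preservation through the $2$-sum decomposition, given that multiple valid partitions of $\{a, b\}$ may produce the same $N$.
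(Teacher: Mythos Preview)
Your easy direction and the inductive reduction to $n=2$ are fine (and in fact you are more explicit about the $n=1$ direction than the paper, which leaves it implicit).  Your observation that, once a first step $(\alpha,a)$ is fixed, the second step is forced, so that non-uniqueness is equivalent to non-uniqueness of the first step, is also correct and useful.

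The gap is in the $n=2$ base case.  You commit to producing an alternative first step of the specific form $(\bar\alpha,a)$, and your entire $2$-sum plan is aimed at forcing $M\bar\alpha a$ to be connected.  But this need not hold: it can happen that $M\alpha a$ is connected, $M\beta b$ is disconnected, \emph{and} $M\bar\alpha a$ is disconnected.  In that situation the alternative first step is $(\bar\beta,b)$, not $(\bar\alpha,a)$, and nothing in your outline covers that case.  The appeals to Theorem~\ref{dennismain} and the $2$-sum machinery do not rescue this: the second ``good'' element promised by Theorem~\ref{dennismain} may lie in $E(N)$, and a $1$-separation of $M\beta b$ does not in general lift to a $2$-separation of $M$, so Propositions~\ref{dennis3.6}--\ref{connconn} and Lemma~\ref{new2} are not available in the way you suggest.

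The paper avoids all of this with a short connectivity argument.  The only tool is Lemma~\ref{daggy}: if $M\dagger e$ and $M\dagger e\ddagger f$ are connected but $M\ddagger f$ is not, then $\{e\}$ is a component of $M\ddagger f$, whence $M\ddagger f\ba e = M\ddagger f/e$.  (This already gives your ``$\{a\}$ is a component of $M\beta b$'' in one line, with no $2$-sums.)  Iterating this observation yields Lemma~\ref{newt2}, a four-case analysis showing that from any constrained admissible ordering beginning $((\alpha,e),(\gamma,f))$ one can always manufacture a different one by changing the first two entries; the cases correspond exactly to which of $M\gamma f$, $M\bar\alpha e$, $M\bar\gamma f$ are connected.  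No structural decomposition of $M$ is needed.
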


The next two lemmas contain the core of the   proof of this theorem.

\begin{lemma}
\label{daggy}
Let each of $\dagger$ and $\ddagger$ denote a deletion or contraction operation. Suppose both $M\dagger e$ and $M\dagger e \ddagger f$ are connected, but $M\ddagger f$ is not. Then $\{e\}$ and $E(M) - \{e,f\}$ are the components of $M\ddagger f$. Moreover, 
$$M\ddagger f \ba e = M\ddagger f /e.$$
\end{lemma}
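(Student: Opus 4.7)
Set $M' = M \ddagger f$ and let $C_1, C_2, \ldots, C_k$ be the components of $M'$, so $k \geq 2$; label so that $e \in C_1$. Since the components of $M'$ give a direct-sum decomposition (their ranks sum to $r(M')$, using Corollary~\ref{xyconn} and Lemma~\ref{8.2.3} in the standard way), we have $\lambda_{M'}(C_1) = 0$. The plan is to show $C_1 = \{e\}$ and $k = 2$; the moreover part will then drop out immediately.

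For the first step, suppose for a contradiction that $|C_1| \geq 2$. I would exhibit $(C_1 - e, E(M') - C_1)$ as a non-trivial $1$-separation of $M' \dagger e$, contradicting the connectedness of $M' \dagger e = M \ddagger f \dagger e$. Applying Lemma~\ref{8.2.3} to $\{e\} \subseteq C_1$ and to $C_1 - e \subseteq C_1$, both paired against $E(M') - C_1$, yields $\sqcap_{M'}(\{e\}, E(M') - C_1) = 0$ and $\sqcap_{M'}(C_1 - e, E(M') - C_1) = 0$. In the deletion case, the latter is exactly $\lambda_{M' \ba e}(C_1 - e)$, so this quantity is $0$. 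In the contraction case, unfolding $r_{M'/e}(X) = r_{M'}(X \cup \{e\}) - r_{M'}(\{e\})$ and invoking $\sqcap_{M'}(\{e\}, E(M') - C_1) = 0$ collapses $\lambda_{M'/e}(C_1 - e)$ to $\lambda_{M'}(C_1) = 0$. Either way, since $C_1 - e$ and $E(M') - C_1$ are both non-empty, $M'\dagger e$ is disconnected, a contradiction.

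For the second step, now that $C_1 = \{e\}$, we have $\lambda_{M'}(\{e\}) = 0$, so Lemma~\ref{newt} gives $M' \ba e = M' / e$. Either choice of $\dagger$ therefore produces the same polymatroid, and it inherits $C_2, \ldots, C_k$ as a direct-sum decomposition. Connectedness of $M'\dagger e$ forces $k = 2$, so the other component of $M'$ is $E(M') - \{e\} = E(M) - \{e, f\}$, establishing the first claim. The moreover statement $M\ddagger f \ba e = M\ddagger f / e$ is the very equality already obtained from Lemma~\ref{newt}.

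The only delicate point is the contraction case of the first step: one has to carefully expand $\lambda_{M'/e}(C_1 - e)$ via the contraction rank formula and verify that the stray local-connectivity term $\sqcap_{M'}(E(M') - C_1, \{e\})$ vanishes. Once that bookkeeping is done, the contraction case collapses to the same $0$ that the deletion case produces, and the proof goes through uniformly in $\dagger$.
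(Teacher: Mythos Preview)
Your proof is correct and follows essentially the same approach as the paper's. The only difference is that the paper invokes Lemma~\ref{8.2.4} directly to obtain $\lambda_{M\ddagger f\,\dagger e}(Y)\le\lambda_{M\ddagger f}(Y)=0$ uniformly in~$\dagger$, whereas you split into the deletion and contraction cases and verify the contraction case by hand via the rank formula; your case analysis is exactly a special instance of Lemma~\ref{8.2.4}, so the arguments coincide.
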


\begin{proof} Let $(X,Y)$ be a $1$-separating partition of $E(M\ddagger f)$ with $Y$ minimal and non-empty avoiding $e$. Then $\lambda_{M\ddagger f}(Y) = 0$. Thus, by Lemma~\ref{8.2.4}, $\lambda_{M\ddagger f \dagger e}(Y) = 0$. As $M\dagger e \ddagger f$ is  connected, $X - e = \emptyset$. Thus $\{e\}$ and $E(M) - \{e,f\}$ are  components of $M\ddagger f$. Hence, by Lemma~\ref{newt}, $M\ddagger f \ba e = M\ddagger f /e.$
\end{proof}

\begin{lemma}
\label{newt2}
Let $M$ be a connected  polymatroid   and $N$ be a  connected  minor of $M$. Let $e$ and $f$ be distinct elements of  $E(M)$  and let $Z = E(M) - \{e,f\}$. Suppose that $\{\alpha,\beta\} = \{\backslash,/\} = \{\gamma, \delta\}$. 
Assume that $M\alpha e$ and $M\alpha e\gamma f$ are connected.   Then either
 \begin{itemize}
\item[(i)] $M\gamma f$ is  connected and $M\alpha e\gamma f =  M\gamma f\alpha e$; or 
\item[(ii)] $M\gamma f$ is disconnected,  $M\alpha e\gamma f=  M\beta e\gamma f$, and
	\begin{itemize} 
	\item[(a)] $M\beta e$ is connected; or 
	\item[(b)] $M\beta e$ is disconnected, $M\alpha e\gamma f= M\delta f\beta e$, and $M\delta f$ is connected; or
	\item[(c)] $M\beta e$ and  $M\delta f$ are disconnected, and $M\alpha e\gamma f = M\alpha e\delta f$.
\end{itemize}
\end{itemize}
\end{lemma}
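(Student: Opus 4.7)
The plan is to run through the case analysis in the statement, repeatedly combining three facts. First, deletion and contraction commute in polymatroids, so $M\alpha e\gamma f = M\gamma f\alpha e$ and similarly for any choice of operations and orderings. Second, Lemma~\ref{newt} says that $\lambda_{M'}(\{z\})=0$ forces $M'\backslash z = M'/z$; equivalently, once $\{z\}$ is shown to be a component of $M'$, deletion and contraction at $z$ agree. Third, I will invoke the proof of Lemma~\ref{daggy} in the following slightly strengthened form: if $M\ddagger f$ is disconnected while $M\ddagger f\dagger e$ is connected, then $\{e\}$ and $E(M)-\{e,f\}$ are the components of $M\ddagger f$, and $M\ddagger f\backslash e = M\ddagger f/e$. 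The proof of Lemma~\ref{daggy} as written in fact uses only this weaker hypothesis, because commutativity identifies $M\dagger e\ddagger f$ with $M\ddagger f\dagger e$, so the standalone connectedness of $M\dagger e$ is never actually used in its proof.

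With these tools in hand, Case (i) is immediate from commutativity. For Case (ii), assume $M\gamma f$ is disconnected. Because $M\gamma f\alpha e = M\alpha e\gamma f$ is connected, the strengthened Lemma~\ref{daggy} (with $\dagger=\alpha$, $\ddagger=\gamma$) identifies $\{e\}$ as a component of $M\gamma f$ and gives $M\gamma f\backslash e = M\gamma f/e$. Commuting the operations then yields $M\alpha e\gamma f = M\beta e\gamma f$, which is the common formula in (ii). If $M\beta e$ is connected, we are in subcase (a) and are done.

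Suppose instead $M\beta e$ is disconnected. Since $M\beta e\gamma f = M\alpha e\gamma f$ is connected, the same argument, applied with the roles of $e$ and $f$ interchanged and with $\ddagger=\beta$, $\dagger=\gamma$, produces $\{f\}$ as a component of $M\beta e$. This step does not require $M\gamma f$ to be connected (which would be false here), precisely because the strengthened form of Lemma~\ref{daggy} only needs the doubly-reduced polymatroid $M\beta e\gamma f$ to be connected. Hence $M\beta e\backslash f = M\beta e/f$, so $M\beta e\gamma f = M\beta e\delta f$, and commuting yields $M\alpha e\gamma f = M\delta f\beta e$. If $M\delta f$ is connected, we are in subcase (b). Otherwise, $M\delta f\beta e = M\alpha e\gamma f$ is still connected, and a final iteration of the same argument (with $\ddagger=\delta$, $\dagger=\beta$) yields $\{e\}$ as a component of $M\delta f$, giving $M\delta f\backslash e = M\delta f/e$; commuting then delivers $M\alpha e\delta f = M\beta e\delta f = M\alpha e\gamma f$, which is subcase (c).

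The main subtle step is the observation that the proof of Lemma~\ref{daggy} goes through without the explicit hypothesis that $M\dagger e$ is connected. Without this relaxation the argument stalls in subcase (c), where neither $M\gamma f$ nor $M\delta f$ is connected and the literal statement of the lemma (with $e$ and $f$ swapped) cannot be applied. Once that point is secured, the remainder is a careful bookkeeping exercise, iterating the combination of commutativity and Lemma~\ref{newt} at most three times.
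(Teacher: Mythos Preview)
Your proof is correct and follows essentially the same route as the paper's: the paper likewise commutes operations, applies Lemma~\ref{daggy} to conclude that $\{e\}$ is a component of $M\gamma f$ and hence $M\gamma f\backslash e = M\gamma f/e$, and then iterates the argument through the subcases. Your observation that the hypothesis ``$M\dagger e$ connected'' in Lemma~\ref{daggy} is not actually used in its proof is correct and worth making explicit; the paper's own proof of Lemma~\ref{newt2} tacitly relies on this in subcases (b) and (c), where it asserts the component conclusions without citing Lemma~\ref{daggy} by name, presumably expecting the reader to rerun the same one-line argument.
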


\begin{proof} We may assume that $M\gamma f$ is disconnected otherwise (i) holds.  As $M\alpha e\gamma f$ is connected,   Lemma~\ref{daggy} implies that  $\{e\}$ is a component of $M\gamma f$, and $M\gamma f\ba e =  M\gamma f/e$. Thus 
$$M\alpha e\gamma f = M\gamma f\alpha e = M\gamma f\beta e = M\beta e\gamma f.$$
We may assume  that $M\beta e$ is disconnected otherwise (ii)(a) holds. Then $M\beta e$  has $\{f\}$   as a component  and $M\beta e\ba f =  M\beta e/f$. Thus  
$$M\alpha e\gamma f = M\beta e\gamma f = M\beta e\delta f = M\delta f\beta e.$$

We may now assume  that $M\delta f$ is disconnected otherwise (ii)(b) holds. Then $M\delta f\ba e = M\delta f/e$. Thus $M\delta f\beta e = M\delta f\alpha e$. Hence  
 $M\alpha e\gamma f = M\delta f\beta e = M\delta f\alpha e =  M\alpha e\delta f$, and (ii)(c) holds.
\end{proof}

We are now able to prove the main result of this section. 

\begin{proof}[Proof of Theorem \ref{uniqone}]
We may assume that $|E(M) - E(N)| \ge 2$. 
Let  $((\alpha, e),(\gamma, f),\linebreak
(\alpha_3,a_3)\dots, (\alpha_k,a_k))$  be   a constrained admissible ordering of $E(M) - E(N)$. We use Lemma~\ref{newt2} to show that  $E(M) - E(N)$  has a constrained admissible ordering $((\alpha_1, a_1),(\alpha_2,a_2), 
(\alpha_3,a_3),\dots, (\alpha_k,a_k))$  in which $((\alpha, e),(\gamma, f)) \neq ((\alpha_1, a_1),(\alpha_2,a_2))$ and $\{e,f\} = \{a_1,a_2\}$. If $M\gamma f$ is connected, then we can take  $((\alpha_1, a_1),(\alpha_2,a_2))$ to be $((\gamma, f), (\alpha, e))$. Using the notation of Lemma~\ref{newt2}, if $M\gamma f$ is disconnected but $M \beta e$ is connected, then we can take $((\alpha_1, a_1),(\alpha_2,a_2))$ to be $((\beta, e), (\gamma, f))$. 
Now suppose that $M\gamma f$ and  $M \beta e$ are disconnected. If $M \delta f$ is connected, then we can take $((\alpha_1, a_1),(\alpha_2,a_2))$ to be $((\delta, f),   (\beta, e))$. Finally, if $M \delta f$ is disconnected, then we can take $((\alpha_1, a_1),(\alpha_2,a_2))$ to be $((\alpha, e),   (\delta, f))$. 
\end{proof}

\end{document}